\def\pplus{\tilde{+}}%% A MODIFIER !!!!!!!!!!!!!!
\newcounter{paragrafsubsub}[subsubsection]
\renewcommand{\theparagrafsubsub}{%
\thesubsubsection.\roman{paragrafsubsub}}
\newcommand{\paragrafsubsub}{%
\refstepcounter{paragrafsubsub}
{\bf \theparagrafsubsub}\hspace{0.2em}--- }
\newcounter{paragrafsub}[subsection]
\renewcommand{\theparagrafsub}{\thesubsection.\arabic{paragrafsub}}
\newcommand{\paragrafsub}{%
\refstepcounter{paragrafsub}
{\bf \theparagrafsub}\hspace{0.2em}--- }
\newcounter{paragraf}[section]
\renewcommand{\theparagraf}{\thesection.\arabic{paragraf}}
\newcommand{\paragraf}{%
\refstepcounter{paragraf}
{\bf \theparagraf}\hspace{0.2em}--- }
\newcommand\paragraphe{%
\par \indent
\ifcase\value{subsection} %
\paragraf
\else
\ifcase\value{subsubsection}\paragrafsub %
\else\paragrafsubsub
\fi\fi
}
\def\longto{\longrightarrow}
\def\Id{{\rm Id}}\def\Ker{{\rm Ker}}
\def\eps{\varepsilon}
\def\wbar{\overline{w}}
\def\Rep{{\rm Rep}}
\def\Hom{{\rm Hom}}
\def\GL{{\rm GL}}\def\SL{{\rm SL}}
\def\ext{{\rm ext}}
\def\Det{{\mathcal Det}}
 \def\Tau{{\mathcal T}}
\def\NN{{\mathbb N}}
\def\ZZ{{\mathbb Z}}
\def\kk{{\mathbb K}}
\def\lieg{{\mathfrak g}}
\def\lieu{{\mathfrak u}}
\def\lp{{\mathfrak p}} \def\lb{{\mathfrak b}}
\def\kbprod{\odot_0}
\def\GL{{\rm GL}}
\def\codim{{\rm codim}}
\newtheorem{lemma}{Lemma}
\newtheorem{theo}{Theorem}
\newtheorem{coro}{Corollary}
\newtheorem{theoi}{Theorem}
\newenvironment{proof}{{\noindent\bf Proof.}}{\hfill $\square$}
\newenvironment{defin}{{\noindent\bf Definition.}}{\\}
\newenvironment{remark}{{\noindent\bf Remark.}}{}
\begin{document}
\title{Multiplicative formulas in Cohomology of $G/P$\\
and in quiver representations}
\author{N. Ressayre\footnote{ {\tt ressayre@math.univ-montp2.fr}}}

\maketitle
\begin{abstract}

\end{abstract}

\section{Introduction}

Consider a partial flag variety $X$ which is not a grassmaninan.
Consider also its cohomology ring ${\rm H}^*(X,\ZZ)$ endowed with the base
formed by the Poincaré dual classes of the Schubert varieties.
In \cite{Richmond:recursion}, E.~Richmond showed that some coefficient
structure of the product in ${\rm H}^*(X,\ZZ)$ are products of two such coefficients for 
smaller flag varieties.

Consider now a quiver without oriented cycle. If $\alpha$ and $\beta$ denote two
dimension-vectors, $\alpha\circ\beta$ denotes the number of $\alpha$-dimensional 
subrepresentations of a general $\alpha+\beta$-dimensional representation. 
In \cite{DW:comb}, H.~Derksen and J.~Weyman expressed some numbers $\alpha\circ\beta$ as products 
of two smaller such numbers.

The aim of this work is to prove two generalisations of the two above results by the same way.\\

We now explain our result about cohomology of the generalized flag varieties.
Let $G$ be a semi-simple group, $T\subset B\subset Q\subset G$ be a maximal torus,
a Borel subgroup and a parabolic subgroup respectively.
In \cite{BK}, P.~Belkale and S.~Kumar  defined a new product (associative and commutative) on the 
cohomology group ${\rm H}^*(G/Q,\ZZ)$ denoted by $\kbprod$.  
Any coefficient structure of $\kbprod$ in the base of Schubert classes is either zero or the 
corresponding  coefficient structure for the cup product.

Let now $P\subset Q$ be second parabolic subgroups of $G$ and $L$ denote the Levi subgroup 
of $P$ containing $T$. We obtain the following:

\begin{theoi}
\label{thi:kbprod}
  Any coefficient structure of $({\rm H}^*(G/Q,\ZZ),\kbprod)$ in the base of Schubert classes is
the product of such two coefficients for $({\rm H}^*(G/P,\ZZ),\kbprod)$ and 
$({\rm H}^*(L/(L\cap Q),\ZZ),\kbprod)$ respectively.
\end{theoi}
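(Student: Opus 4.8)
The plan is to compute the structure constants for $G/Q$ through the smooth projection $\pi\colon G/Q\longto G/P$, whose fibre over the base point is canonically $L/(L\cap Q)$, with $L$ the Levi of $P$ and $L\cap Q$ the standard parabolic of $L$ having the same Levi $L_Q$ as $Q$. The combinatorial backbone is the parabolic factorisation for the chain of Weyl groups: every minimal representative $w\in W^Q$ writes uniquely as $w=w'w''$ with $w'\in W^P$ and $w''\in W_P\cap W^Q=W_L^{L\cap Q}$, and $\ell(w)=\ell(w')+\ell(w'')$. This yields a bijection between Schubert classes of $G/Q$ and pairs (Schubert class of $G/P$, Schubert class of $L/(L\cap Q)$), under which $\pi$ maps $X_w$ onto $X_{w'}$ and realises $X_w$ as a sub-fibration with fibre the Schubert variety $X_{w''}\subset L/(L\cap Q)$. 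First I would fix a triple $(u,v,w)$ with $\codim X_u+\codim X_v+\codim X_w=\dim G/Q$, split each index as above, and declare the two candidate factors to be the corresponding coefficients for $G/P$ and for $L/(L\cap Q)$.

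Next I would invoke Belkale--Kumar's description of $\kbprod$: the coefficient equals the ordinary intersection number when the triple is \emph{Levi-movable} for $L_Q$, and vanishes otherwise; Levi-movability is the conjunction of (a) transversality of generic $L_Q$-translates of the tangent spaces $T_{w_i}X_{w_i}$ inside $T_{eQ}(G/Q)$ and (b) the vanishing, on the centre $\mathfrak z(\liel_Q)$, of the character $\sum_i\theta_{w_i}-\theta_{\mathrm{top}}$ assembled from the $T$-weights of those tangent spaces. The structural input is that $\pi$ splits the tangent space $L_Q$-equivariantly, $T_{eQ}(G/Q)=\lieg/\lp\ \oplus\ \lp/\mathfrak q$, with horizontal summand $\lieg/\lp=\pi^*T(G/P)$ and vertical summand $\lp/\mathfrak q=\liel/(\liel\cap\mathfrak q)=T(L/(L\cap Q))$; moreover $T_{w}X_{w}$ respects this splitting, its horizontal part being governed by $w'$ (hence matching $T_{w'}X_{w'}$ in $G/P$) and its vertical part by $w''$ (matching $T_{w''}X_{w''}$ in the fibre). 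In particular $\theta_{w}$ decomposes as a horizontal weight $\theta_{w'}$ plus a vertical weight $\theta_{w''}$.

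The heart of the proof, and the step I expect to be the main obstacle, is to show that Levi-movability \emph{decouples}: $(u,v,w)$ is Levi-movable in $G/Q$ if and only if $(u',v',w')$ is Levi-movable in $G/P$ and $(u'',v'',w'')$ is Levi-movable in $L/(L\cap Q)$. For the character condition (b) I would use the decomposition $\mathfrak z(\liel_Q)=\mathfrak z(\liel)\oplus\mathfrak a$, where $\mathfrak z(\liel)$ governs the base and the complement $\mathfrak a\subset[\liel,\liel]$ is exactly the centre governing the fibre. On $\mathfrak z(\liel)$ the vertical weights die (they are roots of $L$), so the upstairs vanishing restricts to the base condition; on $\mathfrak a$ the total top-weight $\theta_{\mathrm{top}}$ kills the horizontal part (it is the determinant of an $L$-representation, trivial on the derived algebra), so the upstairs vanishing must be matched with the fibre condition. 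The delicate point is the horizontal cross-term $\sum_i\theta_{w'_i}$ restricted to $\mathfrak a$: one must show, using the degree condition and the precise shape of the inverted-root sets attached to the $w'_i\in W^P$, that it does not obstruct the equivalence, so that the single vanishing upstairs splits cleanly into the base and fibre vanishings. For the transversality condition (a) I would choose the three translators inside $L_Q\subset L$, so that they act compatibly on base and fibre; since the ambient space and all three Schubert tangent spaces are compatible with the horizontal/vertical splitting, a generic $L_Q$-translate is transverse upstairs exactly when the induced horizontal translates are transverse in $\lieg/\lp$ and the induced vertical translates are transverse in $\liel/(\liel\cap\mathfrak q)$.

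Finally, granting the decoupling, I would read off the numerical identity. When the triple is Levi-movable, $\kbprod$ agrees with the cup product, so the coefficient is the number of points in a transverse intersection of three translated Schubert varieties; Levi-movability lets me take the translators in $L_Q$ so that this intersection is fibred over a transverse triple intersection of the $X_{w'_i}$ in $G/P$, with transverse fibre a triple intersection of the $X_{w''_i}$ in $L/(L\cap Q)$. Counting points fibre by fibre then gives that the coefficient for $G/Q$ is the product of the coefficient for $G/P$ and the coefficient for $L/(L\cap Q)$. When the triple is not Levi-movable, decoupling forces at least one of the two factors to vanish, and both sides of the asserted equality are zero, completing the argument.
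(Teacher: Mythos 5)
Your route is genuinely different from the paper's: you follow the fibration strategy (project $\pi\colon G/Q\to G/P$, factor the Weyl group elements, decouple Levi-movability, count points fibre by fibre), whereas the paper expresses each Belkale--Kumar coefficient as the \emph{degree} of the dominant map $\eta\colon G\times_{P(\lambda)}C^+\to (G/B)^s$ via Kleiman's theorem (Lemma~\ref{lem:c=d}), and obtains the product formula from a general multiplicativity theorem for such degrees through an intermediate parabolic (Theorem~\ref{th:ppal}, proved by factoring $\eta_\eps$ as $\eta\circ[\Id:\eta_P]$), with Azad--Barry--Seitz entering only to transfer the dimension and Levi-movability conditions to $G/P$. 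The problem is that your proposal leaves unproved exactly the two steps that carry all the difficulty, and neither can be closed by the genericity arguments you gesture at. The most serious one is the final counting step: Levi-movability gives transversality of generic $L_Q$-translates \emph{at the single point} $Q/Q$; it does not give that the intersection of such translates is finite, everywhere transverse, or that its cardinality equals the structure coefficient. Kleiman's theorem computes $c^{G/Q}$ from generic $G$-translates, and $L_Q$ does not act transitively on $G/Q$, so you may not ``take the translators in $L_Q$'' and still assert that the intersection has $c^{G/Q}$ points, nor that each of the $c^{G/P}$ fibres downstairs contains exactly $c^{L/L\cap Q}$ points. Bridging special Levi-translates and generic intersection numbers is precisely what the paper's degree machinery (or, alternatively, Belkale--Kumar's GIT arguments) is for; in the paper, Levi-movability is converted into the invertibility of a tangent map at a point of $C$ (Lemma~\ref{lem:Levimovdet}), and the counting is done once and for all by multiplicativity of degrees of field extensions.

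The second gap is the ``decoupling'' equivalence, which you correctly identify as the heart but do not prove. The forward direction (upstairs Levi-movable $\Rightarrow$ base and fibre Levi-movable) is what the paper establishes, using the $Z^\circ$-eigenspace decomposition of $T_{Q/Q}G/Q$ and Corollary~\ref{cor:abs}. The converse, which you need to handle vanishing coefficients, faces a concrete obstruction your sketch does not address: Levi-movability of $(u',v',w')$ in $G/P$ is a genericity statement over $L^s$, while your fibred transversality criterion needs horizontal transversality for generic translates in the much smaller group $(L_Q)^s$; an open dense subset of $L^s$ may be disjoint from $(L_Q)^s$, so no openness/irreducibility argument passes from one to the other. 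Likewise the vanishing of the cross-term $\sum_i\theta_{w_i'}$ restricted to $\mathfrak{a}$ is flagged by you as delicate but never established. Note that the paper's Theorem~\ref{th:kbprod} is stated \emph{under the hypothesis} that the $G/Q$-tuple is Levi-movable, so it never needs this converse; your attempt to also cover the non-Levi-movable case therefore requires an additional nontrivial result that neither your sketch nor the paper supplies.
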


Actualy, Theorem~\ref{th:kbprod} below is more precise and explicit than Theorem~\ref{thi:kbprod}.
This result was already obtained in \cite{Richmond:recursion} when $G=\SL_n$, $Q$ is any parabolic 
subgroup and $P$ is the maximal parabolic subgroup corresponding to the linear subspace in $G/Q$
of minimal dimension.\\

Let $Q$ be a quiver.
The Ringle form (see Section~\ref{sec:defcarquois}) is denoted by 
$\langle\cdot,\cdot\rangle$. We prove the following:

\begin{theoi}
\label{thi:carquois}
  Let $\alpha,\,\beta$ and $\gamma$ be three dimension-vectors. 
We assume that $\langle\alpha,\beta\rangle=\langle\alpha,\gamma\rangle=
\langle\beta 
,\gamma\rangle=0$.
Then, 
$$
(\alpha+\beta\circ \gamma).(\alpha\circ\beta)= 
(\alpha\circ \beta+\gamma).(\beta\circ\gamma).
$$
\end{theoi}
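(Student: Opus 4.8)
The plan is to prove the identity
$$
(\alpha+\beta\circ \gamma)\cdot(\alpha\circ\beta)= (\alpha\circ \beta+\gamma)\cdot(\beta\circ\gamma)
$$
by interpreting each factor as a count of subrepresentations and then constructing an explicit bijection between two families of flags of subrepresentations of a general representation of dimension $\alpha+\beta+\gamma$. Concretely, $\alpha\circ\beta$ counts $\alpha$-dimensional subrepresentations of a general $(\alpha+\beta)$-dimensional representation, so the product $(\alpha+\beta\circ\gamma)\cdot(\alpha\circ\beta)$ should be read as the number of pairs $(W_1\subset W_2)$, where $W_2$ is an $(\alpha+\beta)$-dimensional subrepresentation of a general $(\alpha+\beta+\gamma)$-dimensional representation $M$ and $W_1\subset W_2$ is an $\alpha$-dimensional subrepresentation. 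Symmetrically, the right-hand side counts pairs $(W_1\subset W_2)$ where now $W_1$ is an $\alpha$-dimensional subrepresentation of $M$ and $W_1\subset W_2$ is chosen so that $W_2/W_1$ is a general $\beta$-dimensional subrepresentation of $M/W_1$. The two sides thus count full flags $0\subset W_1\subset W_2\subset M$ with $\dim W_1=\alpha$, $\dim W_2=\alpha+\beta$, and $\dim M/W_2 = \gamma$; the content of the theorem is that these counts coincide and equal the number of such flags, which I will denote by some Euler-characteristic or generic-finiteness count.

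The key steps, in order, are as follows. \emph{Step 1:} I would make precise the scheme parametrizing flags $0\subset W_1\subset W_2\subset M$ of fixed dimension-vectors $(\alpha,\alpha+\beta,\alpha+\beta+\gamma)$ in a general representation $M$, as a tower of quiver-Grassmannian fibrations. \emph{Step 2:} Using the vanishing hypotheses $\langle\alpha,\beta\rangle=\langle\alpha,\gamma\rangle=\langle\beta,\gamma\rangle=0$, I would show that the relevant quiver Grassmannians are generically smooth of the expected dimension and that the generic fibers are finite, so each $\circ$-number is literally a finite count and the products count flags. The Ringel form enters precisely here: for general representations, the dimension of the quiver Grassmannian of $\alpha$-dimensional subrepresentations of an $(\alpha+\beta)$-dimensional representation is controlled by $\langle\alpha,\beta\rangle$, and the vanishing $\langle\alpha,\beta\rangle=0$ forces this Grassmannian to be finite (zero-dimensional). \emph{Step 3:} I would compute the flag count in two ways by the two natural projections of the flag variety, $(W_1\subset W_2)\mapsto W_2$ and $(W_1\subset W_2)\mapsto W_1$, and verify that both projections are generically finite with the fiber cardinalities given by the two remaining $\circ$-numbers; the multiplicativity of generic degrees along a tower then yields the two products.

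The main obstacle will be \emph{Step 2}, namely establishing that genericity is preserved at each stage of the tower. It is not automatic that a general subrepresentation $W_2\subset M$ (general among $(\alpha+\beta)$-dimensional subrepresentations) is itself a general $(\alpha+\beta)$-dimensional representation, nor that the induced quotients $M/W_1$ behave generically; one must check that the incidence hypotheses on the Ringel form propagate so that the inner $\circ$-numbers computed inside $W_2$ and inside $M/W_1$ agree with the abstract ones. I expect this to require a careful semicontinuity or dimension-counting argument showing that the total space of flags has the expected dimension and that each projection has the expected generic fiber dimension; the three vanishing conditions on $\langle\cdot,\cdot\rangle$ are exactly what make all the intermediate expected dimensions vanish simultaneously, so that every count is genuinely finite and the fibration degrees multiply. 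Once the genericity transfer is in hand, both products equal the single number of generic flags and the identity follows.
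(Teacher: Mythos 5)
Your overall plan is in fact the paper's plan: interpret each $\circ$-number as the degree (generic fiber count) of the universal map $\eta$ attached to a decomposition (Lemma~\ref{lem:deltad2}), realize both products as two ways of computing the degree of the flag-counting map $\eta_{\alpha\pplus\beta\pplus\gamma}$ via the two parenthesizations $\alpha\pplus(\beta\pplus\gamma)$ and $(\alpha\pplus\beta)\pplus\gamma$ (the two formulas of Theorem~\ref{th:carquois} with $s=3$), and multiply degrees along the tower. However, your Step 2 --- which you yourself single out as the main obstacle --- is a genuine gap, and the tools you propose for it (semicontinuity, dimension counting) would not close it. What you would need is that, for generic $M$, the $(\alpha+\beta)$-dimensional subrepresentations $W_2\subset M$ each contain exactly $\alpha\circ\beta$ subrepresentations of dimension $\alpha$, and similarly for the quotients $M/W_1$. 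Semicontinuity gives inequalities on fiber dimensions, never equalities of fiber cardinalities; and the representations $W_2$ occurring inside a generic $M$ form a constructible family which a priori need not be dense in $\Rep(Q,\alpha+\beta)$, so ``a general subrepresentation of a general representation is general'' is exactly the unproved assertion, not a consequence of a dimension count.

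The paper's resolution is structural and is the idea your proposal lacks: one never compares fibers over a generic $M$ at all. Instead one works with the irreducible incidence varieties $G\times_{P}C^+$ themselves and computes every degree at their generic points, where multiplicativity is simply the tower formula for finite field extensions (second assertion of Theorem~\ref{th:ppal}; the hypotheses $\langle\beta_i,\beta_j\rangle=0$ give $\delta=0$ everywhere, which is what lets dominance propagate to each intermediate map, and if some map fails to be dominant both sides vanish). The degree of the ``inner'' projection is then identified with the abstract number $\beta\circ\gamma$ (resp.\ $\alpha\circ\beta$) not by any genericity of individual subrepresentations, but by Lemmas~\ref{lem:iso1} and~\ref{lem:thetaiso}: after the isomorphism $\iota$, the relative counting map is $G$-homogeneous, and over the Levi it is the pullback of the absolute map $\eta_L$ along the Bialynicki-Birula vector bundle $\Lambda^+\,:\,C^+\to C$; pullback along a vector bundle and along the $G$-fibration leaves the degree unchanged. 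Equivalently, the forgetful map from the incidence variety to $\Rep(Q,\alpha+\beta)$ is dominant, so at the \emph{generic point of the incidence variety} the subrepresentation $W_2$ is generic, even though this may fail for every single generic $M$. Without this step (or an equivalent one), your two flag counts are not yet identified with the two products of $\circ$-numbers, and the argument is incomplete.
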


Note that, Theorem~\ref{th:carquois} is more general than Theorem~\ref{thi:carquois}, since
$s$  dimension-vectors occur. Moreover, we give in  Theorem~\ref{thi:carquois} a geometric 
interpretation of the product.

If $Q$ has no oriented cycle, we obtain the following corollary:
\begin{theoi}
\label{thi:dw}
We assume that $Q$ has no oriented cycle.
  Let $\alpha,\,\beta$ and $\gamma$ be three dimension-vectors. 
We assume that $\langle\alpha,\beta\rangle=\langle\alpha,\gamma\rangle=0$ and 
$\beta\circ\gamma=1$.

Then, $\alpha\circ \beta+\gamma=(\alpha\circ\beta).(\alpha\circ \gamma)$.  
\end{theoi}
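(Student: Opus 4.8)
The plan is to derive Theorem~\ref{thi:dw} as a direct specialization of Theorem~\ref{thi:carquois}. Theorem~\ref{thi:dw} assumes $\langle\alpha,\beta\rangle=\langle\alpha,\gamma\rangle=0$ and $\beta\circ\gamma=1$; to invoke Theorem~\ref{thi:carquois} I first need its third orthogonality hypothesis $\langle\beta,\gamma\rangle=0$. Here I would use the assumption that $Q$ has no oriented cycle, together with the combinatorial meaning of the $\circ$-product: the number $\beta\circ\gamma$ counts $\beta$-dimensional subrepresentations of a general $(\beta+\gamma)$-representation, and a standard dimension count (via the Schofield-type description of the generic subrepresentation stratum) shows that a generic such subrepresentation exists with the expected codimension exactly when $\langle\beta,\gamma\rangle\geq 0$, the count being finite and positive precisely in the critical case $\langle\beta,\gamma\rangle=0$. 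So the first step is to argue that $\beta\circ\gamma=1$ forces $\langle\beta,\gamma\rangle=0$ (for quivers without oriented cycle, where the Ringel form controls the generic Hom/Ext dimensions).

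Once all three pairwise orthogonalities hold, Theorem~\ref{thi:carquois} applies verbatim and yields
\[
(\alpha+\beta\circ\gamma)\cdot(\alpha\circ\beta)=(\alpha\circ\beta+\gamma)\cdot(\beta\circ\gamma).
\]
The second step is then to substitute the hypothesis $\beta\circ\gamma=1$ into this identity. On the right-hand side, the factor $(\beta\circ\gamma)$ becomes $1$, so the identity collapses to
\[
(\alpha+\beta\circ\gamma)\cdot(\alpha\circ\beta)=(\alpha\circ\beta+\gamma).
\]
This is already close to the desired formula $\alpha\circ\beta+\gamma=(\alpha\circ\beta)\cdot(\alpha\circ\gamma)$, so the remaining task is to identify the factor $(\alpha+\beta\circ\gamma)$ with $(\alpha\circ\gamma)$.

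For that identification, I would again exploit $\beta\circ\gamma=1$: when a general $(\beta+\gamma)$-representation has a \emph{unique} $\beta$-dimensional subrepresentation, the generic $(\beta+\gamma)$-representation is, up to the relevant genericity, built so that the quotient-by-$\beta$ behaves like a generic $\gamma$-representation and the passage from $(\alpha+\beta)\circ\gamma$-type counts to $\alpha\circ\gamma$-type counts is forced. Concretely, the uniqueness means that counting $(\alpha+\beta)$-dimensional subrepresentations of a general $(\alpha+\beta+\gamma)$-representation that contain the canonical $\beta$-part is the same as counting $\alpha$-dimensional subrepresentations of the induced general $\gamma$-complement, giving $(\alpha+\beta)\circ\gamma=\alpha\circ\gamma$ and hence $(\alpha+\beta\circ\gamma)=\alpha\circ\gamma$ after reindexing. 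Substituting this into the collapsed identity produces exactly $\alpha\circ\beta+\gamma=(\alpha\circ\beta)\cdot(\alpha\circ\gamma)$.

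The main obstacle I expect is this last identification, namely rigorously promoting the numerical hypothesis $\beta\circ\gamma=1$ into an equality of two different $\circ$-products. The subtlety is that $\circ$ is defined via \emph{general} representations, so one must check that the canonical $\beta$-subrepresentation of a general $(\beta+\gamma)$-representation deforms compatibly inside a general $(\alpha+\beta+\gamma)$-representation; the no-oriented-cycle hypothesis should guarantee the vanishing of the relevant $\mathrm{Ext}$ groups that would otherwise obstruct this compatibility and keep the relevant incidence varieties irreducible of the expected dimension. Establishing that transversality/irreducibility is where the real work lies, whereas the algebraic manipulation of the Theorem~\ref{thi:carquois} identity is routine.
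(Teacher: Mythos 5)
Your first two steps are sound and essentially agree with the paper. In particular, your observation that $\beta\circ\gamma=1$ forces $\langle\beta,\gamma\rangle=0$ (so that Theorem~\ref{th:carquois} may be applied to $\alpha\pplus\beta\pplus\gamma$) is a point the paper leaves implicit; it can be justified internally and more simply than by your Schofield-type dimension count: by Lemma~\ref{lem:deltad2}, $d(\eta_{\beta\pplus\gamma})=\beta\circ\gamma\neq 0$ already forces $\delta(\eta_{\beta\pplus\gamma})=-\langle\beta,\gamma\rangle=0$, by the very definition of the degree $d$. The algebraic collapse of the resulting identity when $\beta\circ\gamma=1$ is also fine.

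The genuine gap is your third step, the identification $(\alpha+\beta)\circ\gamma=\alpha\circ\gamma$, which is indeed where all the work lies, and your sketch of it does not hold up. First, there is no ``canonical $\beta$-part'' of a general $(\alpha+\beta+\gamma)$-dimensional representation $M$: the hypothesis $\beta\circ\gamma=1$ gives uniqueness of the $\beta$-subrepresentation of a general $(\beta+\gamma)$-dimensional representation, whereas $M$ has $\beta\circ(\alpha+\gamma)$ such subrepresentations, an unrelated number; moreover an $(\alpha+\beta)$-dimensional subrepresentation of $M$ has no reason to contain any prescribed $\beta$-subrepresentation, so restricting the count to those that do changes what is being counted. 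Second, your proposed comparison with ``$\alpha$-dimensional subrepresentations of the induced general $\gamma$-complement'' has a dimension mismatch: that quotient is $\gamma$-dimensional, while $\alpha\circ\gamma$ counts $\alpha$-subrepresentations of $(\alpha+\gamma)$-dimensional representations. Third, the identification cannot be pursued unconditionally: one must split off the trivial case $\alpha\circ\beta=0$ (where the collapsed identity already gives the theorem) and use $\alpha\circ\beta\neq 0$ in an essential way. This is exactly what the paper does: from $\alpha\circ\beta\neq 0$ and $\langle\alpha,\beta\rangle=0$ it deduces $\ext(\alpha,\beta)=0$ (Lemmas~\ref{lem:wellcarquois} and~\ref{lem:Teta}), hence $\eta_{\alpha\pplus\beta}$ is dominant, and being proper, surjective; it then abandons the enumerative interpretation of $\circ$ and switches to the semi-invariant one of \cite{DSW:nbsubrep}, namely $\alpha\circ\gamma=\dim\kk[\Rep(Q,\gamma)]_{\langle\alpha,\cdot\rangle}$, showing (Lemma~\ref{lem:arondb=1}) that multiplication by the unique weight-$\langle\beta,\cdot\rangle$ semi-invariant is an isomorphism onto the weight-$\langle\alpha+\beta,\cdot\rangle$ space: injectivity because $\kk[\Rep(Q,\gamma)]$ is a domain, surjectivity because that space is spanned by the functions $c^V$ of \cite{DW:saturation} and each such $V$ admits an $\alpha$-dimensional subrepresentation $V'$ (by surjectivity of $\eta_{\alpha\pplus\beta}$), giving $c^V=c^{V'}.c^{V/V'}$. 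Your purely enumerative route would need a substitute for this entire argument, and as written it does not provide one.
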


This result is not really stated in \cite{DW:comb}. 
However, the proof of \cite[Theorem~7.14]{DW:comb} implies it. 
Note that the proof of  Theorem~\ref{thi:carquois} is really diffenrent from those of 
\cite[Theorem~7.14]{DW:comb}. Indeed, the numbers $\alpha\circ\beta$ have two non trivially
equivalent interpretations (see \cite{DSW:nbsubrep}): as a number of points in a generic fiber of 
a morphism or as a dimension of the subspace of invariant vectors in a representation. Here we use
the first characterisation. Derksen and Weyman used the second one.\\

In Section~\ref{sec:gen}, we consider more generally a semi-simple group $G$ acting on a variety 
$X$. 

\section{Degree of dominant pairs}
\label{sec:gen}

\subsection{Definitions}
\label{sec:def}

Let $G$ be a reductive group acting on a smooth variety $X$.
Let $\lambda$ be a one parameter subgroup of $G$.
Let $G^\lambda$ or $L$ denote the centralizer of $\lambda$ in $G$.
We consider the usual 
parabolic subgroup associated to $\lambda$ with Levi subgroup $L$:
$$
P(\lambda)=\left \{
g\in G \::\:
\lim_{t\to 0}\lambda(t).g.\lambda(t)^{-1} 
\mbox{  exists in } G \right \}.
$$

Let $C$ be an irreducible  component of the fix point set $X^\lambda$ of $\lambda$ in $G$.
We also consider the Bialinicky-Birula cell $C^+$ associated to $C$:
$$
C^+=\{x\in X\ |\ \lim_{t\to 0}\lambda(t)x\in C\}.
$$
Then, $C$ is stable by the action of $L$ and $C^+$ by the action of $P(\lambda)$.

Consider over $G\times C^+$ the action of $G\times P(\lambda)$ given by the formula 
(with obvious notation): $(g,p).(g',y)=(gg'p^{-1},py)$.
Consider the  quotient $G\times_{P(\lambda)}C^+$ of $G\times C^+$ by the action of 
$\{e\}\times P(\lambda)$.
The class of a pair $(g,y)\in G\times C^+$ in $G\times_{P(\lambda)} C^+$ is denoted by $[g:y]$.

The action of $G\times\{e\}$ induces an action of $G$ on $G\times_{P(\lambda)}C^+$.
Moreover, the first projection $G\times C^+\longto G$ induces a $G$-equivariant map  
$\pi\,:\,G\times_{P(\lambda)} C^+\longto G/P(\lambda)$ which is a locally trivial fibration 
with fiber $C^+$. In particular, we have
$$
\dim(G\times_{P(\lambda)}C^+)=\dim(G/P(\lambda))+\dim(C^+).
$$

Consider also the $G$-equivariant map $\eta\,:\,G\times_{P(\lambda)}C^+\longto X,\,[g:y]\mapsto gy$.
We finally obtain:
\begin{diagram}
  G\times_{P(\lambda)}C^+&\rTo^\eta&X.\\
\dTo^\pi\\
G/P(\lambda)
\end{diagram}

It is well known that
the map
\begin{eqnarray}
  \label{eq:immersion}
\begin{array}{ccc}
G\times_{P(\lambda)}C^+&\longto& G/{P(\lambda)}\times X \\
\left[ g:y\right] & \longmapsto& (g{P(\lambda)},gy)
\end{array}
\end{eqnarray}

is an immersion; its image is the set of the $(g{P(\lambda)},x)\in G/{P(\lambda)}\times X$ such that $g^{-1}x\in Y$.
Note that this fact can be used to prove that  $G\times_{P(\lambda)} C^+$ actually exists.\\

\begin{defin}
  We set $$
  \begin{array}{ll}
    \delta(G,X,C,\lambda)&=\dim(X)-\dim(G/P(\lambda))-\dim(C^+)\\
    &=\codim(C^+,X)-\codim(P(\lambda),G).
\end{array}$$
If $\delta(G,X,C,\lambda)=0$ and $\eta$ is dominant, it induces a finite field extension:  
$k(X)\subset k(G\times_{P(\lambda)}C^+)$. We denote by $d(G,X,C,\lambda)$ the degree of this extension.
If $\delta(G,X,C,\lambda) \neq 0$ or $\eta$ is not dominant, we set $d(G,X,C,\lambda)=0$.

More generally, we define the degree of any morphism to be the degree of the induced extension 
if it is finite and zero otherwise. 
\end{defin}

\subsection{A product formula for $d(G,X,C,\lambda)$}

Let $T$ be a maximal torus of $G$ and $x_0$ be a fixed point of $T$ in $X$.
We keep notation of Section~\ref{sec:def} and assume that the image of $\lambda$ is contained in $T$
and $x_0\in C$. We set $P=P(\lambda)$.

Let $\lambda_\eps$ be another one parameter subgroup of $T$. 
Set $P_\eps=P(\lambda_\eps)$. Consider the irreducible component $C_\eps$ of 
$X^{\lambda_\eps}$ which contains $x_0$ and $C_\eps^+=\{x\in X\,:\,\lim_{t\to 0}\lambda_\eps(t)x\in C\}$.
We assume that:
\begin{enumerate}
\item $P_\eps\subset P$,
\item $C^+_\eps\subset C^+$, and
\item $C_\eps\subset C$.
\end{enumerate}

\begin{remark}
  Notice that the set of the $\lambda_\eps$ which satisfy these three assumptions generated an open convex cone
in the vector space containing the one parameters subgroups of $T$ as a lattice.
\end{remark}

Now, we want to compare $\eta$ and $\eta_\eps$. We introduce the natural morphism:
$$
\eta_L\,:\,L\times_{P_\eps\cap L}(C^+_\eps\cap C)\longto C.
$$
This map is a map $\eta$ as in Section~\ref{sec:def} with $G=L$, $X=C$, $C=C_\eps$ and
$\lambda=\lambda_\eps$. In particular, we have defined 
$\delta(L,C,C^+_\eps \cap C,\lambda_\eps)$ and $d(L,C,C^+_\eps \cap C,\lambda_\eps)$. 

We can now state our main result

\begin{theo}
\label{th:ppal}
  With above notation, we have:
  \begin{enumerate}
  \item $\delta(G,X,C_\eps,\lambda_\eps)=
\delta(L,C,C_\eps,\lambda_\eps)+\delta(G,X,C,\lambda)$, and
  \item If $\delta(L,C,C_\eps,\lambda_\eps)=\delta(G,X,C,\lambda)=0$, then 
$$d(G,X,C_\eps,\lambda_\eps)=d(L,C,C_\eps,\lambda_\eps)\cdot d(G,X,C,\lambda).$$
  \end{enumerate}
\end{theo}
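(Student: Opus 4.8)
The plan is to prove both statements by localizing at the $T$-fixed point $x_0$, reducing everything to a count of $T$-weights, and then, for the degree, factoring the morphism $\eta_\eps$ attached to $(G,X,C_\eps,\lambda_\eps)$ through the one attached to $(G,X,C,\lambda)$. Write $V=T_{x_0}X$ and decompose both $\lieg$ and $V$ into joint weight spaces for the commuting pair of cocharacters $\lambda,\lambda_\eps$ of $T$; to each weight $\chi$ attach the pair of signs $(\mathrm{sign}\langle\chi,\lambda\rangle,\mathrm{sign}\langle\chi,\lambda_\eps\rangle)\in\{+,0,-\}^2$. The first thing I would record is which sign-pairs can occur. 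Assumption (i), $\lp_\eps=\lieg^{\lambda_\eps\geq0}\subset\lieg^{\lambda\geq0}=\lp$, forbids $(-,+)$ and $(-,0)$ in $\lieg$, hence by the $\chi\mapsto-\chi$ symmetry of roots also $(+,0)$ and $(+,-)$; in particular $\lu=\lieg^{\lambda>0}\subset\lieg^{\lambda_\eps\geq0}$, i.e. $U\subset P_\eps$. On $V$, assumption (ii) forbids $(-,+),(-,0)$ and assumption (iii) forbids $(+,0),(-,0)$; the remaining dangerous pair $(+,-)$ is excluded by the convexity from the Remark, since joining $\lambda$ to $\lambda_\eps$ inside the open cone a weight with $\langle\chi,\lambda\rangle>0$ would have to cross the wall $\langle\chi,\cdot\rangle=0$, producing a forbidden $(+,0)$ weight and violating (iii) somewhere on the segment. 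So on $V$ only $(+,+),(-,-),(0,+),(0,0),(0,-)$ survive.

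For (1) I would write $\delta$ in its second form, $\delta=\codim(\text{cell})-\codim(\text{parabolic})$, and prove each codimension additive separately. On the group side $U\subset P_\eps$ gives $P/P_\eps\cong L/(P_\eps\cap L)$, whence $\codim(P_\eps,G)=\codim(P_\eps\cap L,L)+\codim(P,G)$. On the variety side I would use Bialynicki--Birula: at the smooth fixed point $x_0$ one has $T_{x_0}C^+=V^{\lambda\geq0}$, $T_{x_0}C^+_\eps=V^{\lambda_\eps\geq0}$, $T_{x_0}C=V^{\lambda=0}$ and $T_{x_0}(C^+_\eps\cap C)=(V^{\lambda=0})^{\lambda_\eps\geq0}$. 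Substituting the surviving weight multiplicities, the identity $\codim(C^+_\eps,X)=\codim(C^+_\eps\cap C,C)+\codim(C^+,X)$ is a one-line check; conceptually it says that the $\lambda$-limit maps $C^+\to C$ and $C^+_\eps\to C^+_\eps\cap C$ are affine bundles of equal rank. Adding the two additivities gives (1).

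For (2), under $\delta(L,C,C_\eps,\lambda_\eps)=\delta(G,X,C,\lambda)=0$, statement (1) already gives $\delta(G,X,C_\eps,\lambda_\eps)=0$, so all three maps go between varieties of equal dimension and their degrees are defined. Assumptions (i) and (ii) let me factor $\eta_\eps\colon G\times_{P_\eps}C^+_\eps\to X$ as $\eta\circ\psi$, where $\psi\colon G\times_{P_\eps}C^+_\eps\to G\times_P C^+$ is induced by $P_\eps\subset P$ and $C^+_\eps\subset C^+$, and $\eta$ is the map of $(G,X,C,\lambda)$; both send $[g:y]\mapsto gy$. By multiplicativity of degrees of field extensions, $d(G,X,C_\eps,\lambda_\eps)=\deg(\psi)\cdot d(G,X,C,\lambda)$. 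It remains to identify $\deg(\psi)$ with $d(L,C,C_\eps,\lambda_\eps)$. Writing $G\times_{P_\eps}C^+_\eps=G\times_P(P\times_{P_\eps}C^+_\eps)$, the map $\psi$ is a fibre bundle over $G/P$ whose fibre map is $\psi_0\colon P\times_{P_\eps}C^+_\eps\to C^+,\ [p:y]\mapsto py$, so $\deg(\psi)=\deg(\psi_0)$; using $U\subset P_\eps$ to identify $P/P_\eps\cong L/(P_\eps\cap L)$ and the $\lambda$-limit map to push $\psi_0$ down to $\eta_L$, I would show $\deg(\psi_0)=\deg(\eta_L)=d(L,C,C_\eps,\lambda_\eps)$, closing the argument.

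The hard part is this last identification. For generic $x\in X$, $d(G,X,C_\eps,\lambda_\eps)$ counts the cosets $gP_\eps$ with $g^{-1}x\in C^+_\eps$; mapping to $G/P$ fibres this set over the $d(G,X,C,\lambda)$ cosets $gP$ with $g^{-1}x\in C^+$, the fibre over such a $gP$ being $\{pP_\eps\in P/P_\eps:\ p^{-1}x'\in C^+_\eps\}$ with $x'=g^{-1}x\in C^+$. The crux is the compatibility lemma: for generic $x'\in C^+$ with $\lambda$-limit $c=\lim_{t\to0}\lambda(t)x'\in C$, and for $\bar l\in L$ representing a class of $P/P_\eps\cong L/(P_\eps\cap L)$, one has $\bar l^{-1}x'\in C^+_\eps$ if and only if $\bar l^{-1}c\in C^+_\eps\cap C$. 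In the linear model at $x_0$ this is transparent precisely because the pairs $(+,0)$ and $(+,-)$ are absent, so passing to the $\lambda$-limit loses no information about the sign under $\lambda_\eps$. Globalizing it, and checking that the induced extensions of function fields are not merely generically bijective but of equal separable degree, so that the \emph{degrees} and not just the generic cardinalities multiply, together with the verification that $c$ sweeps a dense subset of $C$ as $x$ varies, is where the real work lies.
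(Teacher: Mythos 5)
Your architecture for assertion (2) is the same as the paper's: your $\psi$ is, up to the isomorphism $\iota\colon G\times_P(P\times_{P_\eps}C^+_\eps)\to G\times_{P_\eps}C^+_\eps$ of Lemma~\ref{lem:iso1}, the paper's $[\Id:\eta_P]$; your fibre map $\psi_0$ is its $\eta_P$; and your plan to push $\eta_P$ down to $\eta_L$ along the limit maps $\Lambda_P,\Lambda^+$ is exactly Diagram~\ref{eq:diag}. The problem is that you stop precisely at the mathematical content of the theorem: the identification $\deg(\psi_0)=\deg(\eta_L)$, which you yourself describe as ``where the real work lies'', is never proved, and the globalization and separable-degree issues you list at the end are not loose ends but the whole point. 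The paper closes this with one structural statement (Lemma~\ref{lem:thetaiso}): the map $\Theta\colon P\times_{P_\eps}C^+_\eps\to\eta_L^*(C^+)$, $[p:x]\mapsto([\Lambda_P(p):\Lambda^+(x)],px)$, is an isomorphism onto the pullback of the vector bundle $\Lambda^+\colon C^+\to C$ along $\eta_L$; by $L$-equivariance this is checked on the fibre over the base point of $L/(P_\eps\cap L)$, where (using $P^u\subset P^u_\eps$) both sides become $C^+_\eps$ and $\Theta$ is the identity. Once $\eta_P$ is exhibited as a base change of $\eta_L$ along a vector bundle, the degree equality is automatic (the generic fibre is literally base-changed), your worries about generic points and separable degrees disappear, and assertion (1) also falls out of the two commutative diagrams by pure dimension count --- the paper never needs a weight computation at $x_0$.

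Your separate weight-count proof of assertion (1) has a genuine gap of its own: the exclusion of $(+,-)$ weights in $V=T_{x_0}X$. Hypotheses (ii) and (iii) do not forbid them, and the Remark cannot be used the way you use it: it is an unproved aside, ``generates an open convex cone'' does not assert that the set of admissible $\lambda_\eps$ is itself convex, and your wall-crossing argument needs exactly that convexity --- the one-parameter subgroup sitting on the wall $\langle\chi,\cdot\rangle=0$ must still satisfy hypothesis (iii), which is precisely what is in question. This is not a removable technicality: take $G=T=\GL_1\times\GL_1$ acting on $X=\kk$ by $(t_1,t_2)\cdot v=t_1t_2^{-1}v$, with $\lambda(t)=(t,1)$ and $\lambda_\eps(t)=(1,t)$. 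Then (i)--(iii) hold, the unique tangent weight is of type $(+,-)$, and one computes $\delta(G,X,C_\eps,\lambda_\eps)=1$ while $\delta(G,X,C,\lambda)=\delta(L,C,C_\eps,\lambda_\eps)=0$; so the $(+,-)$ exclusion genuinely does not follow from the three hypotheses, and no convexity argument can rescue it. Note also that the absence of $(+,-)$ weights is, at $x_0$, exactly your ``compatibility lemma'' $(\Lambda^+)^{-1}(C^+_\eps\cap C)=C^+_\eps$, i.e.\ the fibre identification inside the paper's Lemma~\ref{lem:thetaiso} (which the paper asserts rather than argues); so your two gaps are one and the same. In the paper's applications it does hold --- for $X=(G/B)^s$ the weights at $x_0$ are roots of $\lieg$, where hypothesis (i) plus the symmetry $\chi\mapsto-\chi$ kills $(+,-)$, and in the quiver case one checks it on the explicit weight list --- but a correct proof of the theorem must isolate and establish this statement, globally and not just on tangent spaces, rather than defer it.
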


\subsection{Proof of Theorem~\ref{th:ppal}}

\paragraphe
We set 
$$
Y_L=L\times_{P_\eps\cap L}(C^+_\eps\cap C) {\rm\ \ \ and\ \ \ }
Y_P=P\times_{P_\eps}C_\eps^+.
$$
We consider the natural morphism
$$
\eta_P\,:\,Y_P\longto C^+,
$$ 
and
$$
[\Id:\eta_P]\,:\,G\times_P Y_P
\longto G\times_PC^+,\; [g:[p:x]]\longmapsto[g:px].
$$

\begin{lemma}
\label{lem:iso1}
With above notation, we have:
\begin{enumerate}
\item the map $G\times_P Y_P\longto G\times_{P_\eps}C_\eps^+$,
$[g:[p:x]]\longmapsto[gp:x]$ is an isomorphism denoted by $\iota$; moreover, 
\item $\eta_\eps\circ\iota=\eta\circ ([\Id:\eta_P])$.
\end{enumerate}
\end{lemma}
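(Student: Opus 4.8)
The plan is to recognise assertion (i) as the standard transitivity isomorphism for associated fibre bundles: for a chain of subgroups $P_\eps\subset P\subset G$ and a $P_\eps$-variety $Z$ there is a canonical $G$-equivariant isomorphism $G\times_P(P\times_{P_\eps}Z)\simeq G\times_{P_\eps}Z$. Here this is applied with $Z=C_\eps^+$, so that $Y_P=P\times_{P_\eps}C_\eps^+$ and the map is the one stated. Assertion (ii) then becomes a one-line computation.

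For (i), I would first check that the rule $[g:[p:x]]\mapsto[gp:x]$ is well defined. Recalling that the quotients are formed for the action $(e,q)\cdot(g',y)=(g'q^{-1},qy)$, there are two relations to test: the $P_\eps$-relation $[p:x]=[pq^{-1}:qx]$ inside $Y_P$ (for $q\in P_\eps$), and the $P$-relation $[g:\xi]=[gp^{-1}:p\xi]$ in $G\times_P Y_P$ (for $p\in P$), where $P$ acts on $Y_P$ by left translation on the first factor, $p\cdot[p':x]=[pp':x]$. Both checks reduce to cancelling the ambiguity against the single $P_\eps$-relation defining $G\times_{P_\eps}C_\eps^+$, and both go through. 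I would then exhibit the inverse $\kappa\colon[g:x]\mapsto[g:[e:x]]$, verify it is well defined (the $P_\eps$-ambiguity $[g:x]=[gq^{-1}:qx]$ in the target is absorbed by the $P$-relation in the source, using $P_\eps\subset P$), and compute $\iota\circ\kappa=\Id$ and $\kappa\circ\iota=\Id$ directly. Since $\iota$ and $\kappa$ descend from the obvious $G$-equivariant morphisms on the products, they are mutually inverse isomorphisms of varieties.

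Assertion (ii) is then immediate. The natural morphism is $\eta_P([p:x])=px\in C^+$, which is well defined because $C_\eps^+\subset C^+$ and $C^+$ is $P$-stable; hence $[\Id:\eta_P]([g:[p:x]])=[g:px]$ and so $\eta\circ([\Id:\eta_P])([g:[p:x]])=g(px)$. On the other side $\eta_\eps\circ\iota([g:[p:x]])=\eta_\eps([gp:x])=(gp)x$. The two expressions agree, giving $\eta_\eps\circ\iota=\eta\circ([\Id:\eta_P])$.

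The computations are routine; the only point demanding care is the quotient bookkeeping under the specific action convention, and in particular the verification that the candidate inverse $\kappa$ is well defined. This is where the hypothesis $P_\eps\subset P$ is used essentially, to trade a $P_\eps$-ambiguity in the target for a $P$-relation in the source. Strictly one should also invoke the universal property of the geometric quotients to confirm that the set maps $\iota$ and $\kappa$ are genuine morphisms; but this is the standard functoriality of the associated-bundle construction and presents no real obstacle.
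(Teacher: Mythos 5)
Your proof is correct, but it takes a genuinely different route from the paper's. You argue by hand: you check well-definedness of $[g:[p:x]]\mapsto[gp:x]$ against both quotient relations, exhibit the explicit inverse $\kappa\colon[g:x]\mapsto[g:[e:x]]$ (whose well-definedness is precisely where $P_\eps\subset P$ enters, as you note), and prove assertion (ii) by the pointwise identity $(gp)x=g(px)$; the only non-elementary ingredient is the factorization property of the quotient maps, which you correctly flag and which does hold here since $G\longto G/P_\eps$ and $G\longto G/P$ are Zariski-locally trivial. The paper instead argues by equivariance: for (i) it observes that $\iota$ is $G$-equivariant, commutes with the two natural projections to $G/P$, and restricts over the point $P/P$ to the closed immersion $P\times_{P_\eps}C_\eps^+\longto G\times_{P_\eps}C_\eps^+$ of the fiber, and then invokes the general result of \cite[Appendice]{GammaGH} asserting that such a map must be an isomorphism; for (ii) it notes that $\eta_\eps\circ\iota$ and $\eta\circ([\Id:\eta_P])$ are both $G$-equivariant and both extend the immersion of $C_\eps^+$ in $X$, and since every point $[g:[p:x]]$ equals $gp\cdot[e:[e:x]]$, i.e.\ lies in the $G$-orbit of that copy of $C_\eps^+$, the two morphisms coincide. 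Your route buys self-containedness: no external lemma, an explicit inverse, and no rigidity argument; the paper's route buys brevity and sidesteps all quotient bookkeeping, at the cost of relying on a cited structural fact about equivariant maps of fibrations over $G/P$. Both arguments are complete and establish the same statement.
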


\begin{proof}
  The morphism $\iota$ commutes with the two projections on $G/P$. Moreover, the restriction 
of $\iota$ over $P/P$ is the closed immersion $P\times_{P_\eps}C_\eps^+\longto G\times_{P_\eps}C_\eps^+$.
It follows (see for example \cite[Appendice]{GammaGH}) that $\iota$ is an isomorphism.

The morphisms $\eta_\eps\circ\iota$ and $\eta\circ ([\Id:\eta_P])$ are $G$-equivariant and extend the 
immersion of $C_\eps^+$ in $X$. They have to be equal.
\end{proof}\\

\paragraphe
We are now interested in $\eta_P$.
Consider the two following limit morphisms:
$$
\begin{array}{lccl}
  \Lambda_P\,:&P&\longto&L\\
&p&\mapsto&\lim_{t\to 0}\lambda(t)p\lambda(t^{-1})
\end{array}
\begin{array}{c}
  {\rm \ and}\\\ 
\end{array}
\begin{array}{lccl}
  \Lambda^+\,:&C^+&\longto&C\\
&y&\mapsto&\lim_{t\to 0}\lambda(t)y.
\end{array}
$$

The computation $\lambda(t)px=\lambda(t)p\lambda(t^{-1})\lambda(t)x$ implies the easy
\begin{lemma}
\label{lem:Lambdaequiv}
We have: $\Lambda^+(px)=\Lambda_P(p)\Lambda^+(x)$.  
\end{lemma}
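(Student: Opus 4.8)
The plan is to exploit the algebraic identity already highlighted just before the statement, namely
$$
\lambda(t)px=\bigl(\lambda(t)p\lambda(t)^{-1}\bigr)\bigl(\lambda(t)x\bigr),
$$
obtained simply by inserting $\lambda(t)^{-1}\lambda(t)=e$ between $p$ and $x$, and then to let $t$ tend to $0$. The whole content of the lemma is that the two factors on the right converge separately and that their limits can be recombined.

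First I would verify that both sides are well defined. Since $p\in P=P(\lambda)$, the limit $\lim_{t\to 0}\lambda(t)p\lambda(t)^{-1}$ exists in $G$ and equals $\Lambda_P(p)\in L$ by the very definition of $\Lambda_P$. Since $x\in C^+$, the limit $\lim_{t\to 0}\lambda(t)x$ exists and lies in $C$, equal to $\Lambda^+(x)$ by definition of $\Lambda^+$. Moreover $px\in C^+$ because $C^+$ is stable under the action of $P(\lambda)$ (as recorded in Section~\ref{sec:def}), so the left-hand side $\Lambda^+(px)$ is also defined.

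Next I would pass to the limit in the displayed factorisation. Using the continuity of the action morphism $G\times X\longto X$ together with the fact that both factor-limits exist, the limit of the product is the product of the limits:
$$
\Lambda^+(px)=\lim_{t\to 0}\lambda(t)px=\Bigl(\lim_{t\to 0}\lambda(t)p\lambda(t)^{-1}\Bigr)\Bigl(\lim_{t\to 0}\lambda(t)x\Bigr)=\Lambda_P(p)\Lambda^+(x),
$$
which is exactly the asserted formula.

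There is essentially no obstacle here; the single point requiring a word of justification is the interchange of the limit with the group action, and this is immediate from continuity once the two factor-limits are known to exist. This is precisely why the statement is labelled \emph{easy}, and it is the reason the one-line computation preceding it already captures the heart of the argument.
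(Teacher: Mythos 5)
Your proof is correct and takes essentially the same route as the paper: the paper derives the lemma from precisely the identity $\lambda(t)px=\lambda(t)p\lambda(t)^{-1}\lambda(t)x$ followed by passage to the limit as $t\to 0$. The only difference is that you make explicit the well-definedness of both sides and the continuity argument for interchanging limit and product, which the paper leaves implicit in calling the lemma ``easy''.
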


\paragraphe
Recall that $\Lambda^+\,:\,C^+\longto C$ is a vector bundle. The pullback of this vector bundle
by $\eta_L$ is 
$$\eta^*_L(C^+)=\{([l:x],y)\in Y_L\times C^+\;|\;lx=\Lambda^+(y)\},$$
endowed with the first projection $p_1$ on $Y_L$.
Consider the following diagram:

\begin{eqnarray}
  \label{eq:diag}
\begin{diagram}
  Y_P&&\rTo^{\Theta\,:\,[p:x]\mapsto ([\Lambda_P(p):\Lambda^+(x)],px)\;}&&\eta_L^*(C^+)\\
&\rdTo_{[p:x]\mapsto [\Lambda_P(p):\Lambda^+(x)]}&&\ldTo_{p_1}&\\
&&Y_L&&\\
&&\dTo&&\\
&&L/(P_\eps\cap L).&&
\end{diagram}
\end{eqnarray}

\begin{lemma}
\label{lem:thetaiso}
The above diagram is commutative, and the top horizontal map $\Theta$ is an isomorphism.
\end{lemma}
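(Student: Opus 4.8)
The plan is to treat the two assertions separately, handling commutativity and well-definedness first, and the isomorphism by a homogeneous-fibre-bundle argument parallel to the one used for $\iota$ in Lemma~\ref{lem:iso1}.

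First I would check that every arrow in (\ref{eq:diag}) is defined. The edge $Y_P\to Y_L$, $[p:x]\mapsto[\Lambda_P(p):\Lambda^+(x)]$, makes sense because $\Lambda_P\colon P\to L$ is a group homomorphism sending $P_\eps$ into $P_\eps\cap L$: writing $P$ through its root groups, $\Lambda_P$ fixes the root groups $U_\alpha$ with $\langle\lambda,\alpha\rangle=0$ and kills those with $\langle\lambda,\alpha\rangle>0$, while the inclusion $P_\eps\subset P$ forces every root of $P_\eps$ to satisfy $\langle\lambda,\alpha\rangle\ge 0$; combined with the intertwining relation $\Lambda^+(qx)=\Lambda_P(q)\Lambda^+(x)$ of Lemma~\ref{lem:Lambdaequiv}, this gives invariance under the $P_\eps$-action, hence well-definedness. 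The same Lemma~\ref{lem:Lambdaequiv} shows that $\Theta$ lands in $\eta_L^*(C^+)$, since the defining equation of the pullback, namely $\eta_L([\Lambda_P(p):\Lambda^+(x)])=\Lambda^+(px)$, is exactly $\Lambda_P(p)\Lambda^+(x)=\Lambda^+(px)$. Commutativity is then immediate: $p_1\circ\Theta$ is $[p:x]\mapsto[\Lambda_P(p):\Lambda^+(x)]$ because $p_1$ forgets the last coordinate, and compatibility with the two projections onto $L/(P_\eps\cap L)$ holds by construction.

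For the isomorphism, the key remark is that $\Theta$ is $L$-equivariant --- for the left $L$-action on $Y_P$ and the diagonal $L$-action on $\eta_L^*(C^+)$ --- and that source and target are fibre bundles over the flag variety $L/(P_\eps\cap L)$ in a way compatible with $\Theta$. By the principle already invoked in Lemma~\ref{lem:iso1} (\cite[Appendice]{GammaGH}), it then suffices to show that $\Theta$ restricts to an isomorphism over the base point $o=e(P_\eps\cap L)$. Setting $U=R_u(P)$ and $U_\eps=U\cap P_\eps$, the fibre of $Y_P$ over $o$ is $U\times_{U_\eps}C_\eps^+$, because the fibre of $P/P_\eps\to L/(P_\eps\cap L)$ over $o$ is $U/U_\eps$; the fibre of $\eta_L^*(C^+)$ over $o$ is $(\Lambda^+)^{-1}(W)$ with $W=C_\eps^+\cap C$; and under these identifications $\Theta$ becomes the action map $[u:x]\mapsto ux$. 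So the whole statement reduces to proving that $U\times_{U_\eps}C_\eps^+\to(\Lambda^+)^{-1}(W)$, $[u:x]\mapsto ux$, is an isomorphism.

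This last map commutes with $\Lambda^+$ (as $\Lambda_P(u)=e$), so it is a morphism over $W$ and I would argue fibrewise. A point $w\in W\subset C$ is fixed by $\lambda$, so Bialynicki-Birula theory identifies $(\Lambda^+)^{-1}(w)$ with the positive-$\lambda$-weight subspace of $T_wX$, on which $U$ acts; what must be shown is that every $U$-orbit there meets $C_\eps^+$ in exactly one $U_\eps$-orbit. I expect this to be the main obstacle, and it is where all three hypotheses are used: translated into inclusions of weight cones for the commuting pair $\lambda,\lambda_\eps$, the conditions $P_\eps\subset P$, $C_\eps^+\subset C^+$ and $C_\eps\subset C$ say precisely that the $\lu$-directions provide, modulo $\lu_\eps=\lu\cap\lp_\eps$, a transversal complement to the $\lambda_\eps$-nonnegative directions cutting out $C_\eps^+$ inside $C^+$. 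Carrying out this weight bookkeeping at the points of $C_\eps$ (which are fixed by both $\lambda$ and $\lambda_\eps$) and propagating it along the bundle $\Lambda^+$ gives the fibrewise bijectivity, and hence the asserted isomorphism; by contrast, the group theory of the first step and the reduction of the second are purely formal.
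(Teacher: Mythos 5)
Your formal steps coincide with the paper's: well-definedness of $Y_P\to Y_L$ via Lemma~\ref{lem:Lambdaequiv}, commutativity by inspection, and the reduction by $L$-equivariance (as in Lemma~\ref{lem:iso1}, via \cite[Appendice]{GammaGH}) to showing that $\Theta$ is an isomorphism over the class of $e$ in $L/(P_\eps\cap L)$. But your fibre computation misses the group-theoretic fact on which the paper's proof hinges: since $P_\eps\subset P$, the unipotent radicals satisfy $P^u\subset P_\eps^u$, so with your notation $U=R_u(P)\subset P_\eps$ and hence $U_\eps:=U\cap P_\eps=U$. Consequently the fibre of $Y_P$ over the base point is not a genuinely twisted bundle $U\times_{U_\eps}C_\eps^+$; it is just $C_\eps^+$ itself, embedded by $x\mapsto[e:x]$. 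Combining this with your (correct) identification of the fibre of $\eta_L^*(C^+)$ with $(\Lambda^+)^{-1}(C\cap C_\eps^+)$, the restriction of $\Theta$ becomes simply the inclusion $C_\eps^+\hookrightarrow(\Lambda^+)^{-1}(C\cap C_\eps^+)$, and your ``$U$-orbit versus $U_\eps$-orbit'' reformulation is an unnecessarily complicated paraphrase of the statement that this inclusion is an equality.

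That equality is the entire non-formal content of the lemma, and it is precisely what you do not prove: you flag it yourself as ``the main obstacle'' and replace it by a promissory note about weight bookkeeping propagated along $\Lambda^+$. This is a genuine gap, not a routine verification. Read on tangent weights at a point of $C_\eps$ fixed by the two-torus generated by the images of $\lambda$ and $\lambda_\eps$, what is needed is that every weight $\chi$ with $\langle\lambda,\chi\rangle>0$ satisfies $\langle\lambda_\eps,\chi\rangle\geq 0$; whereas the hypotheses $C_\eps^+\subset C^+$ and $C_\eps\subset C$ only give the implications $\langle\lambda_\eps,\chi\rangle\geq0\Rightarrow\langle\lambda,\chi\rangle\geq0$ and $\langle\lambda_\eps,\chi\rangle=0\Rightarrow\langle\lambda,\chi\rangle=0$, and $P_\eps\subset P$ gives the analogous implications for roots of $\lieg$. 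None of these excludes a tangent weight with $\langle\lambda,\chi\rangle>0$ and $\langle\lambda_\eps,\chi\rangle<0$, which is exactly what must be ruled out (and is ruled out when $\lambda_\eps$ is a small perturbation of a multiple of $\lambda$, the situation in which the result is applied). So the bookkeeping you defer is where the relationship between $\lambda$ and $\lambda_\eps$ has to do real work; stopping before it, your proposal establishes only the reductions that the paper also performs, and not the isomorphism itself.
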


\begin{proof}
  First, note that the map $Y_P\longto Y_L$ in Diagram~\ref{eq:diag} is well defined by Lemma~\ref{lem:Lambdaequiv}.
Diagram~\ref{eq:diag} is obviously commutative.

Since all the morphisms in Diagram~\ref{eq:diag} are $L$-equivariant, \cite[Appendice]{GammaGH} implies that it is sufficient
to prove that $\Theta$ is an isomorphism when restricted over the class of $e$ in $L/(P_\eps\cap L)$.
The fiber in $Y_L$ over this point if $C\cap C_\eps^+$.
Since $P^u\subset P_\eps^u$, the fiber in $Y_P$ identify with $C_\eps^+$, by $x\in C_\eps^+\mapsto [e:x]$.
The fiber in $\eta_L^*(C^+)$ also identify with $C_\eps^+$ in such a way the restriction of $\Theta$ becomes the identity.
It follows that $\Theta$ is an isomorphism.
\end{proof}\\

\paragraphe
We can now prove Theorem~\ref{th:ppal}.

\begin{proof}
By Lemma~\ref{lem:thetaiso}, we have the following commutative diagram:

\begin{diagram}
  Y_P&&\rTo^{\eta_P}&&C^+\\
&\rdTo_{\tilde{}}^\Theta\rdTo(2,4)&&\ruTo&\\
&&\eta_L^*(C^+)&&\dTo_{\Lambda^+}\\
&&\dTo\\
&&Y_L&\rTo^{\eta_L}&C.
\end{diagram}
It follows that $\dim(C^+)-\dim(Y_P)=\delta(L,C,C_\eps,\lambda_\eps)$ and 
$d(L,C,C_\eps,\lambda_\eps)$ equals the degree of $\eta_P$.

Moreover, by Lemma~\ref{lem:iso1}, we have the following commutative diagram:

\begin{diagram}
  G\times_{P_\eps}C_\eps^+&\rTo_{\tilde{}}&G\times_P Y_P\\
&\rdTo(2,4)_{\eta_\eps}&\dTo_{[id:\eta_P]}\\
&&G\times_PC^+\\
&&\dTo_\eta\\
&&X.
\end{diagram}
The first assertion follows immediately. Let $d$ denote the degree of $[id:\eta_P]$ that is the degree of
$\eta_P$. Since $d=d(L,C,C_\eps,\lambda_\eps)$, we have to prove that $d(G,X,C_\eps,\lambda_\eps)=d.d(G,X,C,\lambda)$.
We firstly assume that  $d(G,X,C_\eps,\lambda_\eps)=0$.
Since $\delta((G,X,C_\eps,\lambda_\eps)=0$, $\eta_\eps$ is not dominant. 
So, $\eta$ or $[id:\eta_P]$ is not dominant. It follows that 
either $d(G,X,C,\lambda)$ or $d$  is zero. The assertion follows.

We now assume that $d(G,X,C_\eps,\lambda_\eps)\neq 0$, that is that $\eta_\eps$ is dominant.
Since the image of $\eta_\eps$ is contained in the image of $\eta$, $\eta$ is dominant. 
Since $\eta_\eps$ is dominant, the dimension of the closure of the image of $[id:\eta_P]$ at least those of $X$.
Since $\delta(L,C,C_\eps,\lambda_\eps)=\delta(G,X,C,\lambda)=0$, this implies that $\eta_P$ is dominant.
Now, the second assertion is simply the multiplicative formula for the degree of a double extension field.
\end{proof}

\subsection{Well genericaly finite pairs}

\paragraphe
If $Y$ is a smooth variety of dimension $n$, $\Tau Y$ denotes its tangent bundle.
The line bundle $\bigwedge^n\Tau Y$ over $Y$ will be called the {\it determinant bundle} and
denoted by $\Det Y$.
If $\varphi\::\:Y\longto Y'$ is a morphism between smooth variety, we denote
by $T\varphi\::\:\Tau Y\longto\Tau Y'$ its tangent map, and by 
$\Det\varphi\::\:\Det Y\longto\Det Y'$ its determinant.

\paragraphe
Consider $\eta\,:\,G\times_{P(\lambda)}C^+\longto X$ as in Section~\ref{sec:def}. 

\begin{defin}
We say that $(G,X,C,\lambda)$ is {\it genericaly finite} if $d(G,X,C,\lambda)\neq 0$.
We say that $(G,X,C,\lambda)$ is {\it well genericaly finite} if it is genericaly finite and there exists $x\in C$ such 
that $T\eta_{[e:x]}$ is invertible.
\end{defin}

\paragraphe
Consider the restriction of $T\eta$ to $C$:
$$
T\eta_{|C^+}\::\:\Tau(G\times_PC^+)_{|C^+}
\longto \Tau(X)_{|C^+},
$$
and the restriction of $\Det\eta$ to $C^+$:
$$
\Det\eta_{|C^+}\::\:\Det(G\times_P C^+)_{|C^+}
\longto \Det(X)_{|C^+}.
$$
Since $\eta$ is $G$-equivariant, the morphism $\Det\eta_{|C^+}$ is $P$-equivariant;
it can be thought as a $P$-invariant section of the line bundle 
${\mathcal D}:=\Det(G\times_P C^+)_{|C^+}^*\otimes\Det(X)_{|C^+}$ over $C^+$.
For any $x\in C$, $\kk^*$ acts linearly via $\lambda$ on the fiber ${\mathcal D}_x$ over $x$ in ${\mathcal D}$:
this action is given by a character of $\kk^*$, that is an interger $m$.
Moreover, this integer does not depends on $x$ in $C$:
we denote by $\mu^{\mathcal D}(C,\lambda)$ this interger.

\begin{lemma}
  We assuma that $X$ is smooth. The, the following are equivalent:
  \begin{enumerate}
  \item $(G,X,C,\lambda)$ is well genericaly finite;
\item $(G,X,C,\lambda)$ is genericaly finite and  $\mu^{\mathcal D}(C,\lambda)=0$.
  \end{enumerate}
\end{lemma}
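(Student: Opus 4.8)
The plan is to exploit that both conditions contain \emph{generic finiteness}, so I would fix that hypothesis and reduce the statement to the equivalence of two genuinely extra conditions: the existence of $x\in C$ with $T\eta_{[e:x]}$ invertible, versus $\mu^{\mathcal D}(C,\lambda)=0$. Since generic finiteness forces $\delta(G,X,C,\lambda)=0$, the spaces $G\times_P C^+$ and $X$ have the same dimension, so $T\eta_{[e:x]}$ is a square linear map and is invertible precisely when its determinant $\Det\eta_{[e:x]}$ is nonzero. Writing $s:=\Det\eta_{|C^+}$ for the $P$-invariant section of ${\mathcal D}$ described before the lemma (evaluated along the embedding $x\mapsto[e:x]$, noting $\eta([e:x])=x$), the extra clause of (i) becomes exactly: \emph{$s$ is nonzero at some point of $C$}. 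Thus the whole content is that $s$ is nonzero somewhere on $C$ if and only if $\mu^{\mathcal D}(C,\lambda)=0$.

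For the implication (i)$\Rightarrow$(ii) I would use only the equivariance of $s$. Since $\lambda(\kk^*)\subset L\subset P$, the section $s$ is $\lambda$-equivariant; for $x\in C$ we have $\lambda(t)x=x$, and $\lambda(t)$ acts on the fibre ${\mathcal D}_x$ with weight $\mu^{\mathcal D}(C,\lambda)$, so
$$
s(x)=s(\lambda(t)x)=\lambda(t)\cdot s(x)=t^{\mu^{\mathcal D}(C,\lambda)}\,s(x)\qquad\text{for all }t.
$$
Hence if $\mu^{\mathcal D}(C,\lambda)\neq 0$ then $s$ vanishes identically on $C$, contradicting well generic finiteness; this gives (i)$\Rightarrow$(ii).

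For the converse (ii)$\Rightarrow$(i) I would first check that $s$ is not identically zero on the whole cell $C^+$. In characteristic zero, generic finiteness means $\eta$ is dominant and generically \'etale, so the locus $U\subset G\times_P C^+$ where $T\eta$ is an isomorphism is dense, open and (by $G$-equivariance of $\eta$) $G$-stable. As $\pi\colon G\times_P C^+\to G/P$ is an open locally trivial fibration and $G/P$ is $G$-homogeneous, $\pi(U)=G/P$, so $U$ meets the fibre over $eP$; this produces $y\in C^+$ with $s(y)\neq 0$. It then remains to transport this non-vanishing down to the fixed locus $C$ using the contraction $\Lambda^+$. Setting $x_0=\Lambda^+(y)\in C$ and restricting to the $\lambda$-stable fibre $F=(\Lambda^+)^{-1}(x_0)$ of the vector bundle $\Lambda^+\colon C^+\to C$, which $\lambda$ contracts to $x_0$ as $t\to 0$, I would choose a $\lambda$-equivariant frame $\sigma$ of ${\mathcal D}|_F$ of weight $m:=\mu^{\mathcal D}(C,\lambda)$ (such a frame exists because $F$ is a $\lambda$-contracting affine space). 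Writing $s=\phi\,\sigma$ with $\phi\colon F\to\kk$ regular, equivariance gives $\phi(\lambda(t)z)=t^{m}\phi(z)$; when $m=0$ the function $\phi$ is $\lambda$-invariant, hence constant along the contraction (letting $t\to 0$ yields $\phi(z)=\phi(x_0)$). Since $s(y)\neq 0$ forces $\phi(x_0)\neq 0$, we get $s(x_0)\neq 0$, i.e. $T\eta_{[e:x_0]}$ is invertible, which is (i).

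The main obstacle is precisely this last transport step: generic finiteness only guarantees non-vanishing of $s$ at a \emph{general} point of $C^+$, whereas (i) asks for non-vanishing on the \emph{central} fibre $C$, and it is exactly the hypothesis $\mu^{\mathcal D}(C,\lambda)=0$ that lets the $\lambda$-contraction carry the non-vanishing from $y$ to $x_0$ via the constancy of the $\lambda$-invariant coordinate function $\phi$. I would also flag the two technical points on which rigour depends: the existence of the weight-$m$ $\lambda$-equivariant frame on the contracting affine fibre $F$, and the appeal to generic smoothness, which is why the equivalence is stated in characteristic zero (in the inseparable case (i) could fail while (ii) holds).
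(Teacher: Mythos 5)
Your proof is correct, and its overall skeleton matches the paper's: the implication (i)$\Rightarrow$(ii) via equivariance of the section $s=\Det\eta_{|C^+}$ at a $\lambda$-fixed point (a nonzero vector in a line on which $\kk^*$ acts with weight $\mu^{\mathcal D}(C,\lambda)$ forces that weight to vanish), and, for (ii)$\Rightarrow$(i), the use of characteristic zero plus $G$-equivariance to locate a point $y\in C^+$ where $T\eta$ is invertible. Where you genuinely diverge is the final transport step from $C^+$ down to $C$: the paper disposes of it in one line by citing Proposition~5 of its reference [GITEigen], which is precisely the general statement that a nonzero $P(\lambda)$-invariant section of a linearized line bundle on $C^+$ with $\mu^{\mathcal D}(C,\lambda)=0$ cannot vanish identically on $C$. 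You instead reprove the special case you need: restricting to the $\lambda$-stable fibre $F=(\Lambda^+)^{-1}(x_0)$ of the Bialynicki-Birula vector bundle (this is where smoothness of $X$ enters, as in the paper), trivializing ${\mathcal D}|_F$ $\lambda$-equivariantly (legitimate since $F$ is an affine space, so its Picard group is trivial and its units are constants, and the weight of the trivialization is pinned down at the fixed point $x_0$), and observing that when $\mu^{\mathcal D}(C,\lambda)=0$ the coordinate function $\phi$ is $\lambda$-invariant, hence constant along the contraction, so non-vanishing at $y$ propagates to $x_0$. This buys self-containedness and makes visible exactly where the hypothesis $\mu^{\mathcal D}(C,\lambda)=0$ is used, at the cost of carrying the two technical facts you rightly flag (equivariant triviality of the bundle on the contracting fibre, and generic smoothness in characteristic zero); the paper's route is shorter because the cited proposition packages the same contraction argument in greater generality.
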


\begin{proof}
  Let us assume that $(G,X,C,\lambda)$ is well genericaly finite and let $x\in C$ be such that $T\eta_x$ is invertible.
Then, $\Det\eta_x$ is a non zero $\kk^*$-fixed point in ${\mathcal D}_x$: the action of $\kk^*$ on the line
${\mathcal D}_x$ must be trivial.

Let us now assume that  $(G,X,C,\lambda)$ is genericaly finite and  $\mu^{\mathcal D}(C,\lambda)=0$.
Since the base field is assumed to be of characteristic zero, the exists a point in $G\times_{P(\lambda)}C^+$ where the 
$T\eta$ is invertible. 
Since $\eta$ is $G$-equivariant, one can find such a point $y$ in $C^+$.
In particular, $\Det\eta_{|C^+}$ is a non zero $P(\lambda)$-invariant section of ${\mathcal D}$.
Since  $\mu^{\mathcal D}(C,\lambda)=0$, \cite[Proposition~5]{GITEigen} implies that $\Det\eta_{|C}$
is non identacaly zero.
\end{proof}

\paragraphe
The well genericaly finite pairs provide a nice standing to apply Theorem~\ref{th:ppal}:

\begin{theo}
\label{th:wellppal}
  We use notation of Theorem~\ref{th:ppal} and assume that $X$ is smooth. 
Let us also assume that $(G,X,C_\eps,\lambda_\eps)$ is well genericaly finite.

Then, $(G,X,C,\lambda)$ and $(L,C,C_\eps,\lambda_\eps)$ are well genericaly finite.
\end{theo}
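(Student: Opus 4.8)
My plan is to reduce the whole statement to the transfer of a single infinitesimal condition, and then to read off the relevant invertibilities from a bigraded weight decomposition at one well-chosen point. First observe the general principle I will use repeatedly: if, for one of the three $\eta$-type maps, the tangent map at a point $[e:x]$ (with $x$ in the relevant fixed-point component) is invertible, then the smooth irreducible source and the smooth $X$ have equal dimension at that point, so the corresponding $\delta$ vanishes; moreover an invertible tangent map at a point forces dominance in characteristic zero, hence generic finiteness. Exhibiting that very point then shows the pair is \emph{well} generically finite. Thus it suffices to produce, from the good point of the $\eps$-pair, good points for $(G,X,C,\lambda)$ and for $(L,C,C_\eps,\lambda_\eps)$.

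Since $(G,X,C_\eps,\lambda_\eps)$ is well generically finite, choose $x\in C_\eps$ with $T(\eta_\eps)_{[e:x]}$ invertible. By assumption (3), $x\in C$, so $x$ is fixed by both $\lambda_\eps$ and $\lambda$, and because $\lambda,\lambda_\eps$ lie in $T$ and commute, the spaces $\lieg$ and $T_xX$ split into joint weight spaces $\lieg_{a,b}$ and $(T_xX)_{a,b}$, with $a$ the $\lambda$-weight and $b$ the $\lambda_\eps$-weight; the infinitesimal action $\rho\colon\lieg\to T_xX$, $\xi\mapsto\xi\cdot x$, decomposes into bigraded components $\rho_{a,b}\colon\lieg_{a,b}\to(T_xX)_{a,b}$. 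Each of the three maps is, at $[e:x]$, of the form $(\xi,v)\mapsto\xi\cdot x+v$, and since $X$ is smooth the relevant tangent bundles are controlled by the weights: $T_xC_\eps^+$ is the non-negative $\lambda_\eps$-weight part of $T_xX$ and $\lieg/\lp_\eps$ the negative $\lambda_\eps$-weight part of $\lieg$. Splitting the map according to the sign of the $\lambda_\eps$-weight, the hypothesis that $T(\eta_\eps)_{[e:x]}$ is invertible is \emph{exactly} the statement that $\rho_{a,b}$ is an isomorphism for every $(a,b)$ with $b<0$. The same bookkeeping gives two analogous reformulations: $T\eta_{[e:x]}$ (the pair $(G,X,C,\lambda)$) is invertible iff $\rho_{a,b}$ is an isomorphism for all $a<0$, and $T(\eta_L)_{[e:x]}$ (the pair $(L,C,C_\eps,\lambda_\eps)$) is invertible iff $\rho_{0,b}$ is an isomorphism for all $b<0$.

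With these three criteria the $L$-statement is immediate, being the restriction of the hypothesis to $a=0$. The $G$-statement is the one genuine obstacle: the hypothesis only controls weights with $b<0$, while invertibility of $T\eta_{[e:x]}$ requires control of all weights with $a<0$, in particular those with $b\ge 0$. This is exactly where assumptions (1) and (2) enter. The inclusion $P_\eps\subset P$ means $\lp_\eps\subset\lp$, which translates into the implication $\lieg_{a,b}\ne 0,\ a<0\Rightarrow b<0$; the inclusion $C_\eps^+\subset C^+$ gives $T_xC_\eps^+\subset T_xC^+$, hence $(T_xX)_{a,b}\ne 0,\ a<0\Rightarrow b<0$. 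Therefore, for $a<0$, both $\lieg_{a,b}$ and $(T_xX)_{a,b}$ vanish unless $b<0$, so the only possibly nontrivial maps $\rho_{a,b}$ with $a<0$ occur among those with $b<0$, which the hypothesis already makes isomorphisms. This yields invertibility of $T\eta_{[e:x]}$.

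Combining these, both $(G,X,C,\lambda)$ and $(L,C,C_\eps,\lambda_\eps)$ possess a point with invertible tangent map, and the general principle of the first paragraph upgrades this to well generic finiteness; the accompanying vanishing of $\delta(G,X,C,\lambda)$ and $\delta(L,C,C_\eps,\lambda_\eps)$ is consistent with Theorem~\ref{th:ppal}(1). I expect the bigraded reformulation of the three invertibility conditions, and the verification that the two cone inclusions force the offending weight spaces (those with $a<0\le b$) to vanish, to be the delicate points; the passage from an invertible tangent map to $\delta=0$, dominance, and generic finiteness is then routine.
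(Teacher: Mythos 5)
Your proof is correct, and its engine coincides with the paper's: your bigraded decomposition under the two commuting one-parameter subgroups is exactly the paper's use of the two-dimensional subtorus $S$ containing the images of $\lambda$ and $\lambda_\eps$, and your key observation that assumptions (1) and (2) force $\lieg_{a,b}=0$ and $(T_xX)_{a,b}=0$ whenever $a<0\le b$ is precisely the paper's pair of inclusions $\lieg^{\lambda}_{<0}\subset\lieg^{\lambda_\eps}_{<0}$ and $(T_xX)^{\lambda}_{<0}\subset(T_xX)^{\lambda_\eps}_{<0}$ combined with $S$-equivariance of the tangent map. Where you genuinely diverge is the architecture around this core. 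The paper uses the $S$-argument only to obtain $\delta(G,X,C,\lambda)=0$, and then transfers invertibility through its structural results: the factorization $\eta_\eps\circ\iota=\eta\circ[\Id:\eta_P]$ of Lemma~\ref{lem:iso1} yields invertibility of $T_{[e:x]}\eta$ and then of $T_{[e:x]}\eta_P$, the first assertion of Theorem~\ref{th:ppal} yields $\delta(L,C,C_\eps,\lambda_\eps)=0$, and the isomorphism $\Theta$ of Lemma~\ref{lem:thetaiso} converts invertibility of $T\eta_P$ into that of $T\eta_L$. You instead formulate uniform bigraded criteria for all three maps (blocks with $b<0$ for $\eta_\eps$, with $a<0$ for $\eta$, with $a=0$, $b<0$ for $\eta_L$) and read off both conclusions directly; in particular your treatment of the $L$-pair, as the $a=0$ slice of the hypothesis, replaces the paper's chain through Lemmas~\ref{lem:iso1}, \ref{lem:thetaiso} and Theorem~\ref{th:ppal}. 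The paper's route buys economy: the only tangent map it ever computes is that of $\eta_\eps$, the rest being quoted from results already proved. Your route buys uniformity and independence from the factorization lemmas, at the price of justifying the three criteria, namely the Bialynicki-Birula identifications $T_xC^+=(T_xX)^{\lambda}_{\ge 0}$, $T_xC_\eps^+=(T_xX)^{\lambda_\eps}_{\ge 0}$ and $T_x(C_\eps^+\cap C)=(T_xC)^{\lambda_\eps}_{\ge 0}$ (valid because $X$, hence also $C$, is smooth) together with the description of each tangent map at $[e:x]$ as $(\xi,v)\mapsto\xi\cdot x+v$; these are standard facts, and you correctly flag them as the delicate points, so your argument is sound.
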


\begin{proof}
If $V$ is a vector space endowed with a linear action of a one parameter subgroup $\lambda$ we denote by
$V^\lambda_{<0}$ the set of $v\in V$ such that $\lim_{t\to 0}\lambda(t^{-1})v=0$.\\

Let  $x\in C_\eps$ be such that $T_{\eta_\eps}$  is invertible at $[e:x]$.
Consider  the subtorus $S$ of dimension two containing the images of $\lambda$ and $\lambda_\eps$.
It fixes $x$.
The tangent map of $\eta_\eps$ at the point $[e:x]$ induces a $S$-equivariant linear isomoprhim:
$\theta\,:\,\lieg/\lp_\eps\simeq \lieg^{\lambda_\eps}_{<0}\longto
(T_xX)^{\lambda_\eps}_{<0}$. 
By assumption, $\lieg^{\lambda}_{<0}\subset\lieg^{\lambda_\eps}_{<0}$ and 
$(T_xX)^{\lambda}_{<0}\subset (T_xX)^{\lambda_\eps}_{<0}$.
Since $\theta$ is $S$-equivariant, it follows that it induces an isomorphism
between $\lieg^{\lambda}_{<0}$ and $(T_xX)^{\lambda}_{<0}$.
In particular, $\delta(G,X,C,\lambda)=0$.

Now, the second assertion of Lemma~\ref{lem:iso1} implies that $T_[e:x]\eta$ is invertible.
It follows that $(G,X,C,\lambda)$ is well genericaly finite.

Since $\delta(G,X,C_\eps,\lambda_\eps)=0$, Theorem~\ref{th:ppal} implies that $\delta(L,C,C_\eps,\lambda_\eps)=0$.
Now, Lemma~\ref{lem:iso1} implies that $T_[e:x]\eta_P$ is invertible.
By Lemma~\ref{lem:thetaiso}, it follows that $T_[e:x]\eta_L$ is invertible.
Then, $(L,C,C_\eps,\lambda_\eps)$ is well genericaly finite.
% Conversely, we assume that $(G,X,C,\lambda)$ and $(L,C,C_\eps,\lambda_\eps)$ are well genericaly finite.
% Theorem~\ref{th:ppal} implies that $\eta_\eps$ is genericaly finite. 
% In particular, there exists a $x\in C_\eps^+$ such that $T_{[e:x]}\eta_\eps$ is invertible.

% Set $y=\lim_{t\to 0}\lambda(t)x$ and $z=\lim_{t\to 0}\lambda_\eps(t)y$; we have
% $y\in C\cap C_\eps^+$ and $z\in C_\eps$.
% Since $(G,X,C,\lambda)$ is well genericaly finite, $\lieg_{<0}^\lambda$ is isomorphic to
% $(T_zX)_{<0}^\lambda$ as a $\kk^*$-module for the action of $\lambda$.
% Since  $(L,C,C_\eps,\lambda_\eps)$ is well genericaly finite, $\liel_{<0}^{\lambda_\eps}$ is
% isomorphic to $(T_zC)_{<0}^{\lambda_\eps}$ as a $S$-module. 
% But, we have $\lieg_{<0}^{\lambda_\eps}\simeq
% \lieg_{<0}^\lambda\oplus \liel_{<0}^{\lambda_\eps}$ and 
%   $(T_zX)_{<0}^{\lambda_\eps}\simeq (T_zX)_{<0}^\lambda\oplus (T_zC)_{<0}^{\lambda_\eps}$.
% It follows that $\mu^{{\mathcal D}_\eps}(C,\lambda)=0$. 
 
% Now, \cite[Proposition~5]{GITEigen} implies that $T_[e:y]\eta_\eps$ is invertible.
\end{proof}

\section{Application to Belkale-Kumar's product}

\subsection{An interpretation of coefficient structures}

\paragraphe
Let  $P$ be a parabolic subgroup of the semisimple group $G$.
Let $T\subset B\subset P$ be a maximal torus and a Borel subgroup of $G$.
Let $W$ denote the Weyl group of $T$ and $G$.
For $w\in W$, we set $X(w)=\overline{BwP/P}$, $X^\circ(w)=BwP/P$ and 
denote by $[X(w)]\in {\rm H}^*(G/P,\ZZ)$ the Poincaré dual class of $X(w)$ in cohomology.
Let $w_1,\cdots,w_s\in W$ be such that $\sum_i\codim X(w_i)=\dim G/P$.
Let $c$ be the non negative integer such that 
$$
[X(w_1)].\cdots.[X(w_s)]=c [{\rm pt}].
$$ 

Let $\lambda$ be a one parameter subgroup of $T$ such that $P=P(\lambda)$.
Consider $X=(G/B)^s$ and the following $T$-fixed point $x=(w_1^{-1}B/B,\cdots,w_s^{-1}B/B)$.
Let $C$ denote the irreducible component of $X^\lambda$ containing $x$.
An easy consequence of Kleiman's transversality Theorem (see \cite{Kle1}) is the following lemma 
which express $c$ has a degree.

\begin{lemma}
\label{lem:c=d}
  We have: $\delta(G,X,C,\lambda)=0$ and $c=d(G,X,C,\lambda)$.
\end{lemma}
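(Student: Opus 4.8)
The statement to be proved, Lemma~\ref{lem:c=d}, asserts two things: that $\delta(G,X,C,\lambda)=0$ and that the cup-product coefficient $c$ equals the degree $d(G,X,C,\lambda)$. The plan is to relate the combinatorial count defining $c$ to the generic number of preimages of the morphism $\eta\,:\,G\times_{P(\lambda)}C^+\longto X=(G/B)^s$, and to invoke Kleiman's transversality theorem to control the geometry of intersections of translated Schubert varieties.

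First I would identify the relevant Bialinicky-Birula cell. With $x=(w_1^{-1}B/B,\dots,w_s^{-1}B/B)$ a $T$-fixed point and $P=P(\lambda)$, the component $C$ of $X^\lambda$ through $x$ is a product of partial flag varieties of the Levi $L$, and the cell $C^+$ attracted to $C$ is, on each factor, the corresponding $B$-orbit closure datum. The key point is that $\eta([g:y])=g\cdot y$ has image consisting of tuples $(x_1,\dots,x_s)$ such that, simultaneously for each $i$, the point $x_i$ lies in a single $G$-translate $gX^\circ(w_i)$ of the Schubert cell. In other words, a general fiber of $\eta$ is in bijection with the set of cosets $gP/P$ for which $g^{-1}x_i\in C^+$ for all $i$, i.e.\ with the points of the intersection $\bigcap_i g_iX(w_i)$ for generic translates.

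Next I would compute the dimension count. Since $\dim(G\times_{P(\lambda)}C^+)=\dim(G/P(\lambda))+\dim(C^+)$ and $\codim(C^+,(G/B)^s)=\sum_i\codim X(w_i)=\dim G/B$ factorwise summed appropriately, the hypothesis $\sum_i\codim X(w_i)=\dim(G/P)$ translates directly into $\delta(G,X,C,\lambda)=\codim(C^+,X)-\codim(P(\lambda),G)=0$. This is the formal part and should be a routine matching of the definition of $\delta$ against the codimension hypothesis on the $w_i$. With $\delta=0$, the morphism $\eta$ is a map between varieties of equal dimension, so its degree $d(G,X,C,\lambda)$ is the number of points in a generic fiber whenever $\eta$ is dominant.

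Finally, the heart of the argument is to identify this generic fiber count with $c$. The cup-product number $c$ is, by the classical geometric interpretation of Poincar� duality in $\mathrm{H}^*(G/P,\ZZ)$, the number of points in a transverse intersection $\bigcap_i g_iX(w_i)$ for general $g_i\in G$; this is exactly where Kleiman's transversality theorem (\cite{Kle1}) enters, guaranteeing that for generic translates the intersection is transverse, finite, and of the expected cardinality $c$. Matching this with the fiber description of $\eta$ above yields both that $\eta$ is dominant (the generic fiber is nonempty and finite precisely when $c\neq 0$, and when $c=0$ the degree is set to zero by convention, so equality still holds) and that $c=d(G,X,C,\lambda)$. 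I expect the main obstacle to be the careful bookkeeping that the fibers of $\eta$ biject with the Kleiman-generic intersection rather than merely surjecting onto it or counting with multiplicity; one must check that distinct points of the fiber correspond to distinct intersection points and that the reducedness supplied by transversality makes the degree equal to the naive count $c$ rather than a weighted sum.
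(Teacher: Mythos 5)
Your proposal is correct and follows exactly the route the paper intends: the paper's own ``proof'' is just a citation to \cite[proof of Lemma~14]{GITEigen}, and the argument given there is precisely your combination of the dimension count $\codim(C^+,X)=\sum_i\codim X(w_i)$, the identification of a generic fiber of $\eta$ with a generic translated intersection $\bigcap_i h_iX^\circ(w_i)$ via the immersion (\ref{eq:immersion}), and Kleiman's transversality theorem to make that intersection transverse, reduced, and of cardinality $c$. The only detail worth making explicit in your writeup is the equality $\codim(Pw^{-1}B/B,\,G/B)=\codim(BwP/P,\,G/P)$, which is what turns the hypothesis $\sum_i\codim X(w_i)=\dim G/P$ into $\delta(G,X,C,\lambda)=0$.
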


\begin{proof}
See \cite[proof of Lemma~14]{GITEigen}.
\end{proof}

\paragraphe
Lemma~\ref{lem:c=d} explains how to  express the structure coefficients of ${\rm H}^*(G/P,\ZZ)$ in the basis
of Schubert classes in terms of maps $\eta$'s as in Section~\ref{sec:gen}.
We are now going to discuss Levi-movability, a notion introduced in \cite{BK}:\\

\begin{defin}
 Let $w_i\in W$ such that $\sum_i\codim(X(w_i),X)=\dim(X)$.
Then, $(X(w_1),\cdots,X(w_s))$ is said to be {\it Levi-movable} if for 
generic $l_1,\cdots,l_s\in L$ the intersection 
$l_1X^\circ(w_1)\cap\cdots\cap l_s X^\circ(w_s)$ is transverse at $P/P$.
\end{defin}

\begin{lemma}
\label{lem:Levimovdet}
The following are equivalent:
\begin{enumerate}
\item $(X(w_1),\cdots,X(w_s))$ is Levi-movable;
\item there exists $y\in C$ such that the tangent map $T_{[e:y]}\eta$ of $\eta$ at $[e:y]$ is
invertible. 
\end{enumerate}
\end{lemma}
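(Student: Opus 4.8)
The plan is to compute the tangent map $T_{[e:y]}\eta$ explicitly at a point $y=(y_1,\dots,y_s)\in C$ and to recognize its invertibility as the transversality condition defining Levi-movability. Write $X=(G/B)^s$ and recall that the component $C$ of $X^\lambda$ through $x$ is a product $C=C_1\times\cdots\times C_s$, where $C_i=L\cdot w_i^{-1}B/B$ is an $L$-flag variety, and correspondingly $C^+=C_1^+\times\cdots\times C_s^+$ with $C_i^+$ the Bialynicki-Birula cell of $C_i$ in $G/B$. Using the fibration $\pi$ one identifies $T_{[e:y]}(G\times_PC^+)\cong\lieg/\lp\oplus T_yC^+$; on the first summand $T_{[e:y]}\eta$ is induced by the infinitesimal action $\xi\mapsto\xi\cdot y$ and on the second it is the inclusion $T_yC^+\hookrightarrow T_yX$. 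Hence the image of $T_{[e:y]}\eta$ is $\lieg\cdot y+T_yC^+$, and, since $\delta(G,X,C,\lambda)=0$ by Lemma~\ref{lem:c=d}, invertibility is equivalent to surjectivity, that is to $T_yX=\lieg\cdot y+T_yC^+$.

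The next step is to pass to normal spaces. At the $\lambda$-fixed point $y_i$ the attracting cell satisfies $T_{y_i}C_i^+=(T_{y_i}(G/B))^\lambda_{\geq 0}$, so the normal space is $N_i:=T_{y_i}(G/B)/T_{y_i}C_i^+\cong(T_{y_i}(G/B))^\lambda_{<0}$. Because $y_i$ is $\lambda$-fixed, $\xi\mapsto\xi\cdot y_i$ is $\lambda$-equivariant, so its composite with the projection onto $N_i$ kills $\lp=\lieg^\lambda_{\geq0}$ and factors through $\lieg/\lp\cong\lieg^\lambda_{<0}$. Thus surjectivity of $T_{[e:y]}\eta$ is equivalent to surjectivity, and therefore (since $\delta=0$ gives $\dim(\lieg/\lp)=\sum_i\dim N_i$) to bijectivity, of
$$
\phi\,:\,\lieg/\lp\longto\bigoplus_{i=1}^sN_i,\qquad
\xi\longmapsto\bigl(\,\mbox{class of }\xi\cdot y_i\,\bigr)_i.
$$
Writing $y_i=l_iw_i^{-1}B/B$ with $l_i\in L$, the subspaces $T_{y_i}C_i^+$ are the tangent spaces at $y_i$ of the $l_i$-translates of Bialynicki-Birula cells whose negative-weight normal directions $N_i$ are carried, under the projection $G/B\longto G/P$, to the tangent spaces at $P/P$ of the translated cells $l_iX^\circ(w_i)$. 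Injectivity of $\phi$ then says exactly that these translated tangent spaces meet transversally at $P/P$, which is the transversality condition occurring in the definition of Levi-movability.

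Finally I would organize the equivalence. Invertibility of $T_{[e:y]}\eta$ is an open condition on $y\in C$, and $C=\prod_iC_i$ with a generic point $y_i$ corresponding to a generic $l_i\in L$; hence the existence of $y\in C$ with $T_{[e:y]}\eta$ invertible is equivalent to the statement that for generic $(l_1,\dots,l_s)\in L^s$ the intersection $l_1X^\circ(w_1)\cap\cdots\cap l_sX^\circ(w_s)$ is transverse at $P/P$, which is assertion (i). This yields both implications at once. I expect the main obstacle to be the second step: identifying, with the correct $\lambda$-weight conventions, the negative-weight normal space $N_i$ of the cell $C_i^+\subset G/B$ with the tangent space at $P/P$ of the $L$-translated Schubert cell $l_iX^\circ(w_i)\subset G/P$, and checking that the fibre directions of $G/B\longto G/P$, which carry non-negative $\lambda$-weight, do not interfere. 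This is the same bookkeeping, now relative to $L$ rather than $G$, that underlies the proof of Lemma~\ref{lem:c=d} via Kleiman's transversality theorem \cite{Kle1}.
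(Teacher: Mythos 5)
Your reduction steps are sound and essentially retrace the paper's argument: you factor out $T_yC^+$ (on which $T_{[e:y]}\eta$ is the identity), use $\delta(G,X,C,\lambda)=0$ from Lemma~\ref{lem:c=d} to turn invertibility into bijectivity of an induced map $\phi:\lieg/\lp\longto\oplus_iN_i$, and close with the openness/genericity argument relating ``there exists $y\in C$'' to ``for generic $l_1,\cdots,l_s\in L$'' (a point the paper glosses over and which is worth making explicit). The gap is exactly where you predicted it: the identification of $N_i$ with data at $P/P$ in $G/P$. Your proposed route --- carrying $N_i=(T_{y_i}(G/B))^\lambda_{<0}$ through the projection $p:G/B\longto G/P$ --- fails for two reasons. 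First, $p(y_i)=l_iw_i^{-1}P/P$, which is not $P/P$ in general, so ``carried to the tangent space at $P/P$ of $l_iX^\circ(w_i)$'' is not meaningful as stated (and dimensionally $N_i$ must match the \emph{normal} space of that cell, not its tangent space). Second, and fatally, the claim that the fibre directions of $p$ carry non-negative $\lambda$-weight is false: the tangent space to the fibre at $y_i$ is the image of ${\rm Ad}(l_iw_i^{-1})\lp$ under the infinitesimal action, and its $\lambda$-weights are the numbers $\langle w_i^{-1}\alpha,\lambda\rangle$ for $\alpha$ a negative root of $L$, which can be negative. Concretely, for $G=\SL_3$, $P$ the maximal parabolic with $\Delta_L=\{\alpha_2\}$ and $w_i=s_1$, the fibre direction at $s_1B/B$ has weight $\langle s_1(-\alpha_2),\lambda\rangle=\langle-\alpha_1-\alpha_2,\lambda\rangle<0$; in that case this fibre direction spans all of $N_i$, so $T_{y_i}p$ kills $N_i$ entirely and cannot induce any isomorphism into $T_{p(y_i)}G/P$.

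The repair is more direct and is what the paper does: since $C_i^+=Pw_i^{-1}B/B=P\cdot y_i$ is a single $P$-orbit, $T_{y_i}C_i^+$ is the image of $\lp$ under the infinitesimal action $\lieg\longto T_{y_i}(G/B)$, whose kernel is the isotropy algebra ${\rm Ad}(l_iw_i^{-1})\lb$. Hence $N_i\cong\lieg/(\lp+{\rm Ad}(l_iw_i^{-1})\lb)$ purely algebraically, and your $\phi_i$ becomes the canonical projection $\lieg/\lp\longto\lieg/(\lp+{\rm Ad}(l_iw_i^{-1})\lb)$, with no reference to $p$ at all. Since $(\lp+{\rm Ad}(l_iw_i^{-1})\lb)/\lp$ is precisely the tangent space at $P/P$ of the translated shifted cell $l_iw_i^{-1}Bw_iP/P\subset G/P$ (the cell through $P/P$ to which the transversality in the definition of Levi-movability refers), the kernel of $\phi$ is the intersection of these tangent spaces, and injectivity --- equivalently bijectivity, by your dimension count --- is exactly transversality at $P/P$. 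With this substitution for your second step, the rest of your proof goes through as written.
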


\begin{proof}
Let $y\in C$ and  $l_1,\cdots,l_s\in L$ such that $y=(l_1w_1^{-1}B/B,\cdots,l_sw_s^{-1}B/B)$.
Since $\eta$ extends the immersion of $C^+$ in $C^+$; the tangent map $T\eta_[e:y]$ restricts 
to the identity on $T_[e:y]C^+$. In particular, it induces a linear map:
$$
\overline{T}\eta_{[e:y]}\,:\,N_{[e:y]}(C^+,G\times_PC^+)\longto N_{y}(C^+,X)
$$  
such that $T_{[e:y]}\eta$ is an isomorphism if and only if $\overline{T}\eta_{[e:y]}$ is.
By $\pi$, $N_{[e:y]}(C^+,G\times_PC^+)$ identifies with $T_eG/P$ that is with $\lieg/\lp$.
Moreover, $N_{y}(C^+,X)$ equals $\bigoplus_iN_{l_iw_i^{-1}B/B}(Pl_iw_i^{-1}B/B,G/B)$
which identifies with $\oplus_i\lieg/(\lp+l_iw_i^{-1}\lb w_il_i)$.
Moreover, after composing by these isomorphisms $\overline{T}\eta_{[e:y]}$ is the canonical map
$\lieg/\lp\longto\oplus_i\lieg/(\lp+l_iw_i^{-1}\lb w_il_i)$.
The lemma follows immediately.
\end{proof}

\subsection{Azad-Barry-Seitz's Theorem}

For later use, we recall in this section the main result of \cite{AzBaSe}.
Let $G$ be a semisimple group and $P$ be a parabolic subgroup of $G$. 
We choose a Levi subgroup $L$ of $P$ and denote by $U$ its unipotent radical.
We are interested in the action of $L$ on the Lie algebra $\lieu$ of $U$.

Let $T$ be a maximal torus of $L$ and $B$ be a Borel subgroup of $G$ containing $T$.
Let $\lieg$ denote the Lie algebra of $G$.
Let $\Delta\subset\Phi^+\subset\Phi$ (resp. $\Delta_L\subset\Phi_L^+\subset \Phi_L$) be 
the set of simple roots, positive roots 
and roots of $G$ (resp. $L$) for $T$ corresponding of $B$ (resp. $B\cap L$).
For any $\alpha\in\Phi$, we denote by $\lieu_\alpha$ the line generated by  the eigenvectors in $\lieg$ of weight $\alpha$.

Since $\lieu$ has no multiplicity for the action of $T$, it has no multiplicity for the action of $L$:
we have a canonical decomposition of $\lieu$ as a sum $\oplus_iV_i$ of irreducible $L$-modules.
Since $T\subset L$, each $V_i$ is a sum of $\lieu_\alpha$ for some $\alpha\in\Phi^+-\Phi^+_L$:
the decomposition $\lieu=\oplus_iV_i$ corresponds to a partition $\Phi^+-\Phi^+_L=\bigsqcup_i\Phi_i$.

Let $\beta$ and $\beta'$ be two positive roots. 
We write 
\begin{eqnarray}
  \label{eq:beta}
\beta=\sum_{\alpha\in \Delta_L}c_\alpha\alpha + \sum_{\alpha\in\Delta-\Delta_L}d_\alpha\alpha,
\end{eqnarray}
with $c_\alpha$ and $d_\alpha$ in $\NN$. 
We also write $\beta'$ in the same way with some $c_\alpha'$ and $d_\alpha'$. 
We write $\beta\equiv\beta'$ if and only if $\sum_{\alpha\in\Delta-\Delta_L}d_\alpha\alpha=
\sum_{\alpha\in\Delta-\Delta_L}d_\alpha'\alpha$. The relation $\equiv$ is obviously an equivalence relation.
Let $S$ denote the set of equivalence classes in  $\Phi^+-\Phi^+_L$ for $\equiv$.
We can now rephrase the main result of \cite{AzBaSe}:

\begin{theo}[Azad-Barry-Seitz]
\label{th:abs}
  For any $s\in S$, $V_s:=\oplus{\alpha\in s}\lieu_\alpha$ is an irreducible $L$-module.
In particular, $\bigsqcup_i\Phi_i$ is the partition in equivalence classes for $\equiv$.
\end{theo}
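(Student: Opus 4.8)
The plan is to prove the Azad--Barry--Seitz theorem by reducing the abstract root-theoretic equivalence relation $\equiv$ to the concrete $L$-module decomposition $\lieu=\oplus_i V_i$. The key observation is that the equivalence classes for $\equiv$ are exactly the level sets of the map $\beta\mapsto (d_\alpha)_{\alpha\in\Delta-\Delta_L}$, which records the multiplicities of the roots outside $L$. Writing $\chi$ for the character of the center of $L$ obtained by restricting a root $\beta$, the functions $\beta\mapsto d_\alpha$ are precisely the coordinates of $\beta$ along the central directions. Thus two roots are $\equiv$-equivalent if and only if they have the same restriction to the connected center $Z(L)^\circ$ of $L$.

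First I would make this last point rigorous. Since $L=L(\lambda)$ is the centralizer of $\lambda$, and more generally the Levi of $P$, its center acts on each $\lieu_\beta$ by the character $\beta|_{Z(L)^\circ}$. The roots of $L$ itself vanish on $Z(L)^\circ$, so the decomposition \eqref{eq:beta} shows that $\beta|_{Z(L)^\circ}$ depends only on the coefficients $d_\alpha$ for $\alpha\in\Delta-\Delta_L$; conversely those coefficients are recovered from the restriction. Hence $\beta\equiv\beta'$ if and only if $\lieu_\beta$ and $\lieu_{\beta'}$ lie in the same $Z(L)^\circ$-eigenspace. Because $Z(L)^\circ$ is central in $L$, each such eigenspace is automatically an $L$-submodule, so $V_s=\bigoplus_{\alpha\in s}\lieu_\alpha$ is $L$-stable for every $s\in S$.

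The heart of the matter, and the step I expect to be the main obstacle, is proving that each $V_s$ is actually \emph{irreducible} rather than merely $L$-stable. Here I would invoke the genuine content of the cited Azad--Barry--Seitz result: one shows that the lowest (or highest) root in a given class $s$ generates all of $V_s$ under the action of the root subgroups of $L$. Concretely, given $\beta,\beta'$ in the same class $s$, their difference $\beta-\beta'$ lies in the span of $\Delta_L$, so one can pass from $\beta'$ to $\beta$ by successively adding or subtracting simple roots of $L$. The point is that each such step can be realized by bracketing with a root vector $e_{\pm\gamma}$ for $\gamma\in\Delta_L$, provided the relevant structure constant is nonzero; this nonvanishing is exactly what the root-string combinatorics in \cite{AzBaSe} guarantees in characteristic zero. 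Iterating, one reaches every weight space $\lieu_\alpha$ with $\alpha\in s$ from any other, which forces the $L$-module $V_s$ to be irreducible.

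Finally, since the $V_s$ are $L$-stable, mutually non-isomorphic (distinct central characters), and exhaust $\lieu$, their decomposition must coincide with the canonical multiplicity-free decomposition $\lieu=\oplus_i V_i$ into irreducibles. This identifies the combinatorial partition $S$ with the partition $\bigsqcup_i\Phi_i$, which is the second assertion of the theorem. The only subtlety to watch is that the central-character argument gives the $L$-stable refinement for free, so the real work is entirely in the irreducibility claim, where the characteristic-zero hypothesis on the structure constants is essential.
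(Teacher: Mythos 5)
The first thing to note is that the paper does not prove this statement at all: the section explicitly says it \emph{recalls} the main result of \cite{AzBaSe}, and the theorem is stated as a rephrasing of that reference. So your proposal cannot be measured against an internal argument of the paper; it has to stand on its own, and as written it does not.

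The sound part of your argument is the identification of the $\equiv$-classes with central characters: since the simple roots of $L$ vanish on $Z^\circ$ and the restrictions $(\alpha_{|Z^\circ})_{\alpha\in\Delta-\Delta_L}$ are linearly independent, Equation~\ref{eq:beta} gives $\beta\equiv\beta'$ if and only if $\beta_{|Z^\circ}=\beta'_{|Z^\circ}$, so each $V_s$ is a $Z^\circ$-eigenspace and hence $L$-stable. This is correct, but it is exactly the (easy) argument the paper itself runs \emph{after} stating the theorem, in order to deduce Corollary~\ref{cor:abs} from it; it is not the content of the theorem. The content is irreducibility, and there your proposal is circular: you say you would ``invoke the genuine content of the cited Azad--Barry--Seitz result'' and that the required nonvanishing ``is exactly what the root-string combinatorics in \cite{AzBaSe} guarantees.'' Citing the theorem's own source for its decisive step is not a proof of that theorem. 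To make your sketch into one, you would have to actually establish the combinatorial claim underneath it: for $\beta,\beta'$ in the same class there is a chain $\beta'=\gamma_0,\gamma_1,\dots,\gamma_k=\beta$ with each $\gamma_{j+1}-\gamma_j\in\pm\Delta_L$ and with every intermediate $\gamma_j$ a root. Granting that chain, the rest is routine in characteristic zero: the structure constant $N_{\gamma,\delta}$ is nonzero whenever $\gamma+\delta$ is a root, and any nonzero $L$-submodule of $V_s$ is $T$-stable, hence contains some line $\lieu_{\gamma_j}$, hence by the chain all of $V_s$. But the chain property is precisely the nontrivial part --- it is, in essence, the theorem --- and your proposal leaves it entirely to the reference. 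A minor further point: \cite{AzBaSe} proves the statement in arbitrary characteristic by working with root subgroups rather than Lie brackets, so characteristic zero is a convenience of your Lie-algebra formulation, not something ``essential'' to the result.
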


For any $\alpha\in\Phi$, we denote by $\Ker\alpha$ the Kernel of the character $\alpha$ of $T$.
Let $Z$ be the center of $L$. Let $Z^\circ$ denote the neutral component of $Z$ and $X(Z^\circ)$ denote 
the group of characters of $Z^\circ$.
Under the action of $Z^\circ$, $\lieu$ decompose as 
$$
V=\oplus_{\chi\in X(Z^\circ)}V_\chi,
$$ where $V_\chi$ is the vector subspace 
of weight $\chi$. Since $Z^\circ$ is central in $L$, each $V_\chi$ is $L$-stable. 

Note that $Z^\circ\subset Z\subset T$; and more precisely
$$
Z=\bigcup_{\alpha\in\Delta_L}\Ker\alpha.
$$
It follows that for $\beta$ as in Equation~\ref{eq:beta}, the restriction  $\beta_{|Z^\circ}$ of $\beta$ to $Z^\circ$ equals
$\sum_{\alpha\in\Delta-\Delta_L}d_\alpha\alpha_{|Z^\circ}$.
Moreover, the family $(\alpha_{|Z^\circ})_{\alpha\in\Delta-\Delta_L}$ is free in the rational vector vector space containing 
the characters of the torus $Z^\circ$.
We obtain that 
$$
\beta\equiv\beta'\iff\beta_{|Z^\circ}=\beta'_{|Z^\circ}.
$$
In particular, each $V_s$ is one $V_\chi$ with above notation. In particular, we have:

\begin{coro}
\label{cor:abs}
  Each $V_\chi$ (with $\chi\in X(Z^\circ)$) is an irreducible $L$-module.
\end{coro}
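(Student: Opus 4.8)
The plan is to derive Corollary~\ref{cor:abs} directly from the Azad-Barry-Seitz Theorem (Theorem~\ref{th:abs}) together with the equivalence between the relation $\equiv$ and equality of restrictions to $Z^\circ$, which has just been established. Concretely, the text preceding the corollary shows that $\beta\equiv\beta'\iff\beta_{|Z^\circ}=\beta'_{|Z^\circ}$ for positive roots $\beta,\beta'\in\Phi^+-\Phi^+_L$. Since each $V_\chi$ is by definition the sum of the root lines $\lieu_\alpha$ over those $\alpha\in\Phi^+-\Phi^+_L$ whose restriction $\alpha_{|Z^\circ}$ equals $\chi$, this equivalence says precisely that the $Z^\circ$-weight decomposition of $\lieu$ refines into exactly the same blocks as the partition of $\Phi^+-\Phi^+_L$ into $\equiv$-classes. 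In other words, every nonzero $V_\chi$ coincides with some $V_s$ for $s\in S$.

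First I would note that $V_\chi=\bigoplus_{\alpha\,:\,\alpha_{|Z^\circ}=\chi}\lieu_\alpha$, and that the set of $\alpha\in\Phi^+-\Phi^+_L$ with $\alpha_{|Z^\circ}=\chi$ is exactly one equivalence class $s\in S$ by the displayed equivalence. Hence $V_\chi=V_s$. (One should observe that if no positive root of $\lieu$ restricts to $\chi$, then $V_\chi=0$ and there is nothing to prove; the statement is about the nonzero summands.) Then I would simply invoke Theorem~\ref{th:abs}, which asserts that each $V_s$ with $s\in S$ is an irreducible $L$-module, to conclude that the corresponding $V_\chi$ is irreducible. This closes the argument.

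I do not anticipate a genuine obstacle here, since the corollary is essentially a restatement of Theorem~\ref{th:abs} through the bijection $S\leftrightarrow\{\chi:V_\chi\neq 0\}$ furnished by $\beta\equiv\beta'\iff\beta_{|Z^\circ}=\beta'_{|Z^\circ}$. The only point requiring a moment of care is verifying that this map between $\equiv$-classes and nonzero $Z^\circ$-weights is a genuine bijection rather than merely a refinement in one direction: one must check both that distinct classes give distinct weights (which follows from the freeness of $(\alpha_{|Z^\circ})_{\alpha\in\Delta-\Delta_L}$ already recorded in the text) and that every weight arises from a single class (which is the content of the equivalence). Once this is in hand, the irreducibility of each $V_\chi$ is immediate from the theorem.
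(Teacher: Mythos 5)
Your proof is correct and takes essentially the same route as the paper: the paper's own argument is exactly the observation that the equivalence $\beta\equiv\beta'\iff\beta_{|Z^\circ}=\beta'_{|Z^\circ}$ identifies each nonzero $V_\chi$ with one of the $V_s$ ($s\in S$), after which irreducibility is immediate from Theorem~\ref{th:abs}. Your added care about the bijection between $\equiv$-classes and nonzero $Z^\circ$-weights, and about discarding zero weight spaces, is just a slightly more explicit rendering of the same reasoning.
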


\subsection{A multiplicative formula for structure coefficients of $\kbprod$}

\paragraphe
Let now $Q\subset P$ be two parabolic subgroups of the semisimple group $G$.
Let $T\subset B\subset Q$ be a maximal torus and a Borel subgroup of $G$.

Let $L$ denote the Levi subgroup of  $P$ containing $T$.
Let $W$ (resp. $W_P$) denote the Weyl group of $T$ and $G$ (resp. $L$).

For any $w\in W$, $w^{-1}Bw\cap L$ is a Borel subgroup of $L$ containing $T$.
So, there exists a unique $\wbar\in W_P$ such that 
\begin{eqnarray}
\wbar^{-1}(B\cap L)\wbar=w^{-1}Bw\cap L.
  \label{eq:defwbar}
\end{eqnarray}
To any $w\in W$, we now associated three Schubert varieties in $G/P$, $G/Q$ and $L/L\cap Q$ respectively:
$$
X^{G/P}(w)=\overline{BwP/P},\ \ 
X^{G/Q}(w)=\overline{BwQ/Q}
$$
and
$$
X^{L/L\cap Q}(w)=\overline{(L\cap B)\wbar(L\cap Q/L\cap Q)}.
$$

\begin{theo}
\label{th:kbprod}
Let $w_1,\cdots,w_s\in W$.
We assume that $\sum_i\codim X^{G/Q}(w_i)=\dim G/Q$ and $(X^{G/Q}(w_1),\cdots, X^{G/Q}(w_s))$ is 
Levi-movable.
Then, we have:
\begin{enumerate}
\item \label{ass:kbprod1}
$\sum_i\codim X^{G/P}(w_i)=\dim G/P$ and $\sum_i\codim X^{L/L\cap Q}(w_i)=\dim L/(L\cap Q)$;
\item  \label{ass:kbprod2}
$(X^{G/P}(w_1),\cdots, X^{G/P}(w_s))$ and $(X^{L/L\cap Q}(w_1),\cdots, X^{L/L\cap Q}(w_s))$ are
Levi-movable.
\end{enumerate}
Moreover, by Assertion~\ref{ass:kbprod1} we can define three integers by the formulas:
$$
\begin{array}{rcl}
[X^{G/Q}(w_1)].\cdots. [X^{G/Q}(w_s)]&=&c^{G/Q}_{w_1,\cdots,w_s}[{\rm pt}],\\

[X^{G/P}(w_1)].\cdots. [X^{G/P}(w_s)]&=&c^{G/P}_{w_1,\cdots,w_s}[{\rm pt}] {\rm\ and}\\

[X^{L/L\cap Q}(w_1)].\cdots. [X^{L/L\cap Q}(w_s)]&=&c^{L/L\cap Q}_{w_1,\cdots,w_s}[{\rm pt}].
\end{array}
$$
Then, we have:
$$
c^{G/Q}_{w_1,\cdots,w_s}=c^{G/P}_{w_1,\cdots,w_s}.c^{L/L\cap Q}_{w_1,\cdots,w_s}.
$$
\end{theo}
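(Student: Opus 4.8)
The plan is to recognise all three structure coefficients as degrees $d(G,X,C,\lambda)$ of maps $\eta$ from Section~\ref{sec:gen}, and then to extract the three assertions from Theorem~\ref{th:ppal} and Theorem~\ref{th:wellppal}. First I would fix a one parameter subgroup $\lambda$ of $T$ with $P(\lambda)=P$, and then a one parameter subgroup $\lambda_\eps$ of $T$ with $P(\lambda_\eps)=Q$ chosen in the open cone of the Remark following Theorem~\ref{th:ppal}, so that the assumptions $P_\eps\subset P$, $C^+_\eps\subset C^+$ and $C_\eps\subset C$ all hold; this is possible precisely because $Q\subset P$. Take $X=(G/B)^s$ and the $T$-fixed point $x=(w_1^{-1}B/B,\ldots,w_s^{-1}B/B)$, and let $C$ (resp. $C_\eps$) be the irreducible component of $X^\lambda$ (resp. $X^{\lambda_\eps}$) containing $x$. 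The component $C$ is the product over $i$ of the $\lambda$-fixed components $L\cdot w_i^{-1}B/B\cong L/(w_i^{-1}Bw_i\cap L)$; using the defining relation~(\ref{eq:defwbar}) for $\overline{w_i}$ this identifies $C$ with $(L/(B\cap L))^s$, the point $x$ with $(\overline{w_1}^{-1}(B\cap L)/(B\cap L),\ldots,\overline{w_s}^{-1}(B\cap L)/(B\cap L))$, and the map $\eta_L$ attached (as in Section~\ref{sec:gen}) to the action of $L$ on $C$ and to $\lambda_\eps$ with the map whose degree computes the coefficient in $L/(L\cap Q)$.

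With this dictionary set up, I would translate the coefficients into degrees. By Lemma~\ref{lem:c=d} applied to $\lambda_\eps$ (using $P(\lambda_\eps)=Q$ and the hypothesis $\sum_i\codim X^{G/Q}(w_i)=\dim G/Q$) one gets $\delta(G,X,C_\eps,\lambda_\eps)=0$ and $c^{G/Q}_{w_1,\ldots,w_s}=d(G,X,C_\eps,\lambda_\eps)$. Likewise, \emph{once} the two codimension identities of Assertion~(\ref{ass:kbprod1}) are known, Lemma~\ref{lem:c=d} applied to $\lambda$ acting on $X$ and to $\lambda_\eps$ acting on $C$ will give $c^{G/P}_{w_1,\ldots,w_s}=d(G,X,C,\lambda)$ and $c^{L/L\cap Q}_{w_1,\ldots,w_s}=d(L,C,C_\eps,\lambda_\eps)$.

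Next I would propagate well generic finiteness. Since $(X^{G/Q}(w_1),\ldots,X^{G/Q}(w_s))$ is Levi-movable, Lemma~\ref{lem:Levimovdet} produces a point $[e:y]$ at which $T\eta_\eps$ is invertible; together with $\delta(G,X,C_\eps,\lambda_\eps)=0$ this shows that $(G,X,C_\eps,\lambda_\eps)$ is well generically finite. Theorem~\ref{th:wellppal} then shows that $(G,X,C,\lambda)$ and $(L,C,C_\eps,\lambda_\eps)$ are both well generically finite; in particular both are generically finite, whence $\delta(G,X,C,\lambda)=0$ and $\delta(L,C,C_\eps,\lambda_\eps)=0$. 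Through the identity between Bialynicki--Birula cells and Schubert cells underlying Lemma~\ref{lem:c=d} (namely $\codim(C_i^+,G/B)=\codim(X^{G/P}(w_i),G/P)$ and its analogue inside $L$), so that $\delta$ equals a codimension defect, these two vanishings are exactly the identities of Assertion~(\ref{ass:kbprod1}); this in turn legitimises the two remaining degree identifications of the previous paragraph. Finally, well generic finiteness of the two derived pairs gives, by the reverse implication of Lemma~\ref{lem:Levimovdet}, that $(X^{G/P}(w_1),\ldots,X^{G/P}(w_s))$ and $(X^{L/L\cap Q}(w_1),\ldots,X^{L/L\cap Q}(w_s))$ are Levi-movable, which is Assertion~(\ref{ass:kbprod2}).

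The product formula then follows: with $\delta(L,C,C_\eps,\lambda_\eps)=\delta(G,X,C,\lambda)=0$, the second assertion of Theorem~\ref{th:ppal} yields
$$
d(G,X,C_\eps,\lambda_\eps)=d(L,C,C_\eps,\lambda_\eps)\cdot d(G,X,C,\lambda),
$$
which under the identifications above is precisely $c^{G/Q}_{w_1,\ldots,w_s}=c^{G/P}_{w_1,\ldots,w_s}\cdot c^{L/L\cap Q}_{w_1,\ldots,w_s}$. I expect the main obstacle to be the identification of $c^{L/L\cap Q}$ with $d(L,C,C_\eps,\lambda_\eps)$: one must verify that the $\lambda$-fixed component $C$ is genuinely the product of $L$-flag varieties carrying exactly the Schubert varieties $X^{L/L\cap Q}(w_i)$, and that the relevant tangent and normal spaces split compatibly with the reduction from $G$ to $L$. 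Here the Azad--Barry--Seitz description of $\lieu$ as a multiplicity-free sum of irreducible $L$-modules (Corollary~\ref{cor:abs}) is what lets one match the two pictures cleanly. The remaining points — that the assumptions $P_\eps\subset P$, $C^+_\eps\subset C^+$, $C_\eps\subset C$ can be arranged simultaneously, and the two Bialynicki--Birula-to-Schubert codimension dictionaries — are routine.
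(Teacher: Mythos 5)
Your proof is correct, but it routes the two ``new'' assertions through different machinery than the paper does. You obtain Assertions (i) and (ii) for both $G/P$ and $L/(L\cap Q)$ uniformly: Levi-movability on $G/Q$ plus Lemma~\ref{lem:Levimovdet} makes $(G,X,C_\eps,\lambda_\eps)$ well generically finite, Theorem~\ref{th:wellppal} propagates this to $(G,X,C,\lambda)$ and $(L,C,C_\eps,\lambda_\eps)$, and then generic finiteness forces the two vanishings $\delta=0$ (equivalently the codimension identities), while the invertible-tangent-point condition gives Levi-movability back via the converse direction of Lemma~\ref{lem:Levimovdet}. The paper never invokes Theorem~\ref{th:wellppal} here: it proves Assertions (i) and (ii) for $G/P$ by a direct tangent-space computation at $Q/Q\in G/Q$, decomposing $T_{Q/Q}G/Q$ into eigenspaces of the connected center $Z^\circ$ of $L_Q$ and using Corollary~\ref{cor:abs} (Azad--Barry--Seitz) to see that $T_{Q/Q}P/Q$ is a sum of such eigenspaces, so that the transverse intersection upstairs projects to a transverse intersection in $G/P$ with the same translating elements $l_i\in L_Q\subset L$; it then gets Assertion (i) for $L/(L\cap Q)$ from Lemma~\ref{lem:c=d} together with part (i) of Theorem~\ref{th:ppal}, and Levi-movability for $L/(L\cap Q)$ by inlining the argument of Theorem~\ref{th:wellppal} (Lemmas~\ref{lem:iso1} and \ref{lem:thetaiso}). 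Your route is more economical: the Azad--Barry--Seitz input becomes unnecessary, its role being played by the two-dimensional-torus equivariance argument hidden in the proof of Theorem~\ref{th:wellppal}; the paper's route, in exchange, produces explicit transversality data in $G/P$. One correction to your closing remarks: Corollary~\ref{cor:abs} is not what is needed to identify $C$ with $(L/(B\cap L))^s$ and $\eta_L$ with the $\eta$-map of the $L$-data --- that identification is elementary from Equation~\ref{eq:defwbar}, since components of $X^\lambda$ are products of $L$-orbits of $T$-fixed points, and the paper relies on it just as implicitly as you do; Azad--Barry--Seitz enters the paper's proof only in the $G/P$ step that you bypassed.
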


\begin{proof}
We begin the proof by making some remarks about the tangent space $T_{Q/Q}G/Q$ of $G/Q$ at $Q/Q$.
Let $L_Q$ denote the Levi subgroup of $Q$ containing $T$ and $Z^\circ$ denote its connected center.
We decompose $T_{Q/Q}G/Q$ as a sum $\oplus_{\chi\in X(Z^\circ)} V_\chi$ of eigenvector spaces for the 
action of the torus $Z^\circ$.
Note that $T_{Q/Q}P/Q\subset T_{Q/Q}G/Q$ is stable by the action of $L_Q$. 
Now, Corollary~\ref{cor:abs} implies that there exists $S\subset X(Z^\circ)$ such that  
\begin{eqnarray}
  \label{eq:TPQ}
  T_{Q/Q}P/Q=\oplus_{\chi\in S}V_\chi.
\end{eqnarray}
Let $l\in L_Q$ and $w\in W$. We set $Y^\circ(w)=w^{-1}BwQ/Q$.
One easily checks that $lY^\circ(w)$ is stable by the action of $Z^\circ$.
Since $Q/Q\in lY^\circ(w)$ is a point fixed by $Z^\circ$, $Z^\circ$ acts on $T_{Q/Q}lY^\circ(w)$.
In particular, 
\begin{eqnarray}
  \label{eq:TlY}
T_{Q/Q}lY^\circ(w)=\oplus_{\chi\in X(Z^\circ)}V_\chi\cap T_{Q/Q}lY^\circ(w).
\end{eqnarray}

Since  $(X^{G/Q}(w_1),\cdots, X^{G/Q}(w_s))$ is 
Levi-movable, there exist $l_1,\cdots,l_s\in L_Q$ such that 
\begin{eqnarray}
  \label{eq:sumTlY}
 T_{Q/Q}l_1Y^\circ(w_1^\circ)\oplus\cdots\oplus
  T_{Q/Q}l_sY^\circ(w_s^\circ)= T_{Q/Q}G/Q.
\end{eqnarray}
Consider now the $G$-equivariant projection $\pi\,:\,G/Q\longto G/P$.
Note that the Kernel of the tangent map $T_{Q/Q}\pi$ of $\pi$ at $Q/Q$ is 
$T_{Q/Q}P/Q$.
So, Equations~\ref{eq:TPQ} and \ref{eq:TlY} imply that for any $i=1,\cdots,s$, 
$T_{Q/Q}$ induces an isomorphism from $T_{Q/Q}l_iY^\circ(w_i)\cap\oplus_{\chi\not\in S}V_\chi$ 
onto $T_{Q/Q}l_i\pi(Y^\circ(w_i))$.
Now, Equation~\ref{eq:sumTlY} imply that $\oplus_iT_{P/P}l_i\pi(Y^\circ(w_i))=T_{P/P}G/P$.
Moreover, $L_Q$ is contained in $L$; Assertions~\ref{ass:kbprod1} and \ref{ass:kbprod2} of the theorem 
follows for $G/P$.\\

Recall that $X$ is the variety $(G/B)^s$ and $x=(w_1^{-1}B/B,\cdots,w_s^{-1}B/B)$. 
Let $\lambda$ (resp. $\lambda_\eps$) be a one parameter 
subgroup of $T$ such that $P(\lambda)$ (resp. $P(\lambda_\eps)$) equals $P$ and $Q$.
Let $C$ (resp. $C_\eps$) denote the irreducible component of $X^\lambda$ (resp. $X^{\lambda_\eps}$) 
containing $x$.
With notation of Section~\ref{sec:gen}, Lemma~\ref{lem:c=d} implies that $\delta(G,X,C_\eps,\lambda_\eps)$
and $\delta(G,X,C,\lambda)$ equal zero. Theorem~\ref{th:ppal} implies that $\delta(L,C,C_\eps,\lambda_\eps)=0$.
Assertion~\ref{ass:kbprod1} for $L/L\cap Q$ follows.
Now, the second assertion of Theorem~\ref{th:ppal} with Lemma~\ref{lem:c=d} imply the last formula 
of the theorem.\\

It remains to prove that  $(X^{L/L\cap Q}(w_1),\cdots, X^{L/L\cap Q}(w_s))$ is Levi-movable.
Since $(X^{G/Q}(w_1),\cdots, X^{G/Q}(w_s))$ is Levi-movable, Lemma~\ref{lem:Levimovdet} shows that there exists
$y\in C_\eps$ such that $T_{[e:y]}\eta_\eps$ is invertible.
Now, Lemmas~\ref{lem:iso1} and \ref{lem:thetaiso} imply that $T_{[e:y]}\eta_L$ is invertible.
So,  Lemma~\ref{lem:Levimovdet} allows to conclude.
\end{proof}\\

\begin{remark}
In the case when $G={\rm SL}_n$, Theorem~\ref{th:kbprod} was already obtained in \cite{Richmond:recursion} 
for a lot of pairs $Q\subset P$. 
\end{remark}

\paragraphe
If one know how to compute in $({\rm H}^*(G/P,\ZZ),\kbprod)$ for any maximal $P$ and any $G$,
then  Theorem~\ref{th:kbprod} can be used to compute the structure coefficients of 
$({\rm H}^*(G/Q,\ZZ),\kbprod)$ for
any parabolic subgroup $Q$. 
To illustrate this principle, we state an analogous to \cite[Corollary 23]{Richmond:recursion}:

\begin{coro}
  Let $G={\rm Sp}_{2n}$.
The non-zero coefficients structures of the ring $({\rm H}^*(G/B,\ZZ),\kbprod)$ are all equal to $1$.
\end{coro}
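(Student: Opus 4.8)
The plan is to argue by induction on $n$, using Theorem~\ref{th:kbprod} to peel off one node of the Dynkin diagram at each step. Throughout I take $Q=B$, so that the structure coefficients in question are exactly the $c^{G/B}_{w_1,\cdots,w_s}$, and I choose $P=P_1$ to be the maximal parabolic subgroup of $G={\rm Sp}_{2n}$ stabilizing an isotropic line. Since $B\subset P_1$, the containment hypothesis $Q\subset P$ of Theorem~\ref{th:kbprod} holds. The virtue of this particular choice of $P$ is that both factors produced by the multiplicative formula are as simple as possible.

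First I would record the relevant geometry. Because every line in $\kk^{2n}$ is isotropic for the symplectic form, the homogeneous space $G/P_1$ is the full projective space $\PP^{2n-1}$; and the Levi subgroup $L$ of $P_1$ containing $T$ is $\GL_1\times{\rm Sp}_{2n-2}$, acting respectively on a chosen line, on the symplectic form induced on the quotient of its orthogonal, and trivially otherwise. Consequently $L/(L\cap B)$ is the complete flag variety of the semisimple part ${\rm Sp}_{2n-2}$ of $L$, the central $\GL_1$ contributing only a point; so the Belkale--Kumar structure coefficients $c^{L/(L\cap B)}_{w_1,\cdots,w_s}$ coincide with those of ${\rm Sp}_{2n-2}/B'$.

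Now fix a non-zero coefficient $c^{G/B}_{w_1,\cdots,w_s}$. By the definition of $\kbprod$ this forces both $\sum_i\codim X^{G/B}(w_i)=\dim G/B$ and Levi-movability of $(X^{G/B}(w_1),\cdots,X^{G/B}(w_s))$, which are precisely the hypotheses of Theorem~\ref{th:kbprod} applied to $Q=B$ and $P=P_1$. The theorem then gives $c^{G/B}_{w_1,\cdots,w_s}=c^{\PP^{2n-1}}_{w_1,\cdots,w_s}\cdot c^{{\rm Sp}_{2n-2}/B'}_{w_1,\cdots,w_s}$. The first factor lies in $\{0,1\}$: by the general fact recalled in the introduction, each Belkale--Kumar coefficient is either zero or the corresponding cup-product coefficient, and on $\PP^{2n-1}$ every cup-product structure constant is $0$ or $1$. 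The second factor lies in $\{0,1\}$ by the induction hypothesis for ${\rm Sp}_{2n-2}$, the base case being $n=1$, where $G/B=\PP^1$. Hence the product, being non-zero, equals $1$, which is the assertion.

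The points demanding care, rather than any deep obstacle, are the two geometric identifications $G/P_1\cong\PP^{2n-1}$ and $L\cong\GL_1\times{\rm Sp}_{2n-2}$, together with the verification that passing to the reductive Levi $L$ does not alter the Belkale--Kumar coefficients beyond restricting to its semisimple factor ${\rm Sp}_{2n-2}$. One should also check that indexing the Schubert classes of all three flag varieties by the same $w_i\in W$, as prescribed by Theorem~\ref{th:kbprod} through the rule of Equation~\ref{eq:defwbar} defining $\wbar$, is compatible with the induction; this is automatic from the statement of the theorem, so the substance of the argument is entirely carried by the recursion.
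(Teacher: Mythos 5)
Your proof is correct and follows essentially the same route as the paper: induction on $n$, applying Theorem~\ref{th:kbprod} with $Q=B$ and $P$ the stabilizer of a line (so that $G/P\cong\PP^{2n-1}$ and the Levi contributes ${\rm Sp}_{2n-2}/B$), then using that Belkale--Kumar coefficients on projective space are $0$ or $1$ together with the induction hypothesis. The paper's version is merely terser (it also fixes $s=3$, which is immaterial), while yours spells out the geometric identifications and the base case explicitly.
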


\begin{proof}
The proof proceeds  by induction on $n$.
Let $c$ be a non-zero coefficient structure of $({\rm H}^*(G/B,\ZZ),\kbprod)$.
Let $(w_1,w_2,w_3)$ be such that $[X(w_1)].[X(w_2)].[X(w_3)]=c[{\rm pt}]$.
Since $c$ is non-zero, $(X(w_1),X(w_2),X(w_3))$ is Levi-movable.

Consider the stabilizer $P$ in $G$ of a line in $\kk^{2n}$.
Theorem~\ref{th:kbprod} applied with $B\subset P$ shows that $c$ is the product of coefficient structure
of $({\rm H}^*(G/P,\ZZ),\kbprod)$ and one of $({\rm H}^*({\rm Sp}_{2n-2}/B,\ZZ),\kbprod)$.
The fact that $G/P$ is a projective space and the induction allow to conclude.
\end{proof}

\section{Application to quiver representations}

\subsection{Definitions}
\label{sec:defcarquois}

In this section, we fix some classical notation about quiver representations.

Let $Q$ be a quiver (that is, a finite oriented graph) with vertexes $Q_0$ and arrows $Q_1$.
An arrow $a\in Q_1$ has initial vertex $ia$ and terminal one $ta$.
A representation $R$ of $Q$ is a family $(V(s))_{s\in Q_0}$ of finite dimensional vector spaces and 
a family of linear maps $u(a)\in {\rm Hom}(V(ia),V(ta))$ indexed by $a\in Q_1$.
The dimension vector of $R$ is the family $(\dim(V(s)))_{s\in Q_0}\in \NN^{Q_0}$.

Let us fix $\alpha\in \NN^{Q_0}$ and  a vector space $V(s)$ of dimension $\alpha(s)$ for each $\alpha\in Q_0$. 
Set
$$
\Rep(Q,\alpha)=\bigoplus_{a\in Q_1}{\rm Hom}(V(ia),V(ta)).
$$
Consider also the groups:
$$
\GL(\alpha)\prod_{s\in Q_0}\GL(V(s))
{\rm\ and\ }
\SL(\alpha)\prod_{s\in Q_0}\SL(V(s).
$$
Let $\alpha,\beta\in\ZZ^{Q_0}$. The Ringle form is defined by
$$
\langle\alpha,\beta\rangle=\sum_{s\in Q_0}\alpha(s)\beta(s)-\sum_{a\in Q_1}\alpha(ia)\beta(ta).
$$
Assume now that $\alpha,\beta\in\NN^{Q_0}$.
Following Derksen-Schofield-Weyman (see~\cite{DSW:nbsubrep}), we define $\alpha\circ\beta$ to be the 
number of $\alpha$-dimensional subrepresentation of a general representation of dimension $\alpha+\beta$ 
if it is finite, and $0$ otherwise. 

\subsection{Dominant pairs}

\paragraphe \label{par:defeta}
Let $\lambda$ be a one parameter subgroups of $\GL(\alpha)$.
For any $i\in \ZZ$ and $s\in Q_0$, we set $V_i(s)=\{v\in V(s)\,|\,\lambda(t)v=t^iv\}$ and 
$\alpha_i(s)=\dim V_i(s)$.
Obviously, almost all $\alpha_i$ are zero; and, $\alpha=\sum_{i\in\ZZ}\alpha_i$.
Moreover, $\lambda$ is determined up to conjugacy by the $\alpha_i$'s.

The parabolic subgroup $P(\lambda)$ of $\GL(\alpha)$ associated to $\lambda$ is the set of 
$(g(s))_{s\in Q_0}$ such that for all $i\in\ZZ$ $g(s)(V_i(s))\subset \oplus_{j\leq i}V_j(s)$.

Now, $\Rep(Q,\alpha)^\lambda$ is the set of the $(u(a))_{a\in Q_1}$'s such that 
for any $a\in Q_1$ and for any $i\in\ZZ$, $u(a)(V_i(ia))\subset V_i(ta)$. 
It is  isomorphic to $\prod_i\Rep(Q,\alpha_i)$.
In particular, it is irreducible and denoted by $C$ from now on.

Moreover, $C^+$ is the set of the $(u(a))_{a\in Q_1}$'s such that 
for any $a\in Q_1$ and for any $i\in\ZZ$, $u(a)(V_i(ia))\subset \oplus_{j\leq i}V_j(ta)$.

Consider the morphism $\eta_\lambda\,:\,G\times_{P(\lambda)}C^+\longto \Rep(Q,\alpha)$.
Note that, $P(\lambda)$, $C$ and $C^+$ only depend on the list (ordered by the index $i$) of 
non-zero $\alpha_i$'s. 

\paragraphe The last observation allows the following\\

\begin{defin}
A {\it decomposition} of the vector dimension $\alpha$, is a family $(\beta_1,\cdots,\beta_s)$ of
non-zero vector dimensions such that $\alpha=\beta_1+\cdots+\beta_s$.
We denote the decomposition by $\alpha=\beta_1\pplus\cdots\pplus\beta_s$.
\end{defin}

Any one parameter subgroup $\lambda$ induces a decomposition of $\alpha=\beta_1\pplus\cdots\pplus\beta_s$
where the $\beta_j$'s are the non-zero $\alpha_i$'s ordered by the index $i$.
Note that, up to conjugacy, $P(\lambda)$, $C$ and $C^+$ only depend on this decomposition.
In particular, one can define (up to conjugacy) the {\it map $\eta_{\beta_1\pplus\cdots\pplus\beta_s}$ associated to a decomposition of $\alpha$}.\\

\paragraphe Consider a decomposition $\alpha=\beta_1\pplus\beta_2$ with two dimension-vectors and the associated 
morphism $\eta$. In this section, we collect some easy properties of $\eta$.
Let $(u,v)\in \Rep(Q,\beta_1)\times \Rep(Q,\beta_2)=C\subset \Rep(Q,\alpha)=X$.
Since $\eta$ extends the immersion of $C^+$ in $X$, the tangent map $T_{(u,v)}\eta$ induces the identity
on $T_{(u,v)}C^+$. In particular, it induces a linear map 
$$
\overline{T}\eta_{[e:(u,v)]}\,:\,N_{[e:(u,v)]}(C^+,G\times_PC^+)\longto N_{(u,v)}(C^+,\Rep(Q,\alpha)).
$$
Moreover, $N_{[e:(u,v)]}(C^+,G\times_PC^+)$ identifies with 
$\oplus_{s\in Q_0}{\rm Hom}(V(s),W(s))$ and $N_{(u,v)}(C^+,\Rep(Q,\alpha))$ with
$\oplus_{a\in Q_1}{\rm Hom}(V(ia),W(ta))$.
A direct computation gives the following

\begin{lemma}
\label{lem:Teta}
  With the above identification, we have:
$$
\overline{T}\eta_{[e:(u,v)]}(\sum_{s\in Q_0}\varphi(s))=\sum_{a\in Q_1}v(a)\varphi(ta)-\varphi(ha)u(a).
$$
In particular, the Kernel of $\overline{T}\eta_{(u,v)}$ is ${\rm Hom}(u,v)$ and its Image is
${\rm Ext}(u,v)$. 
\end{lemma}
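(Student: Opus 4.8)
The plan is to compute the tangent map $T_{(u,v)}\eta$ directly and identify its action on the normal space. First I would recall the defining property of $\eta$: it is the $G$-equivariant map $[g:y]\mapsto gy$ extending the inclusion $C^+\hookrightarrow X=\Rep(Q,\alpha)$. Since the domain $G\times_P C^+$ has the two natural subspaces at $[e:(u,v)]$ coming from the $G/P$-direction (identified with $\lieg/\lp=\bigoplus_{s}\Hom(V(s),W(s))$ via $\pi$) and from the $C^+$-direction, and since $T\eta$ restricts to the identity on $T_{(u,v)}C^+$, the only new content is how $\eta$ moves an infinitesimal deformation in the $G/P$-direction. I would parametrize this by $\xi=\sum_{s}\varphi(s)\in\bigoplus_s\Hom(V(s),W(s))$, lift it to $\mathfrak{g}$, and differentiate $t\mapsto \exp(t\xi)\cdot(u,v)$ at $t=0$ using the standard $\GL(\alpha)$-action on $\Rep(Q,\alpha)$.

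\textbf{The differentiation step.} Recall the action of $g=(g(s))_{s\in Q_0}\in\GL(\alpha)$ on a representation $(w(a))_{a\in Q_1}$ is by $g\cdot w(a)=g(ta)\,w(a)\,g(ia)^{-1}$. Differentiating at the identity in the direction $\xi=(\varphi(s))_s$ gives the infinitesimal action $a\mapsto \varphi(ta)\,w(a)-w(a)\,\varphi(ia)$. The point $(u,v)$ has the block-triangular structure coming from $C=\Rep(Q,\beta_1)\times\Rep(Q,\beta_2)$, so I would write each $w(a)$ in block form adapted to $V(s)=V'(s)\oplus W(s)$ (the $\beta_1$- and $\beta_2$-pieces) and track only the off-diagonal block that lands in the normal space $N_{(u,v)}(C^+,X)\cong\bigoplus_{a}\Hom(V(ia),W(ta))$. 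The diagonal blocks of $u$ and $v$ are exactly the maps $u(a),v(a)$, so projecting the infinitesimal action onto the $\Hom(V(ia),W(ta))$-component yields precisely $v(a)\varphi(ta)-\varphi(ia)u(a)$. This matches the claimed formula (reading $ha=ia$), giving the displayed expression for $\overline{T}\eta_{[e:(u,v)]}$.

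\textbf{Identifying kernel and image.} For the last sentence I would invoke the standard description of the space of homomorphisms and extensions between quiver representations. The complex
$$
\bigoplus_{s\in Q_0}\Hom(V(s),W(s))\ \xrightarrow{\ d\ }\ \bigoplus_{a\in Q_1}\Hom(V(ia),W(ta)),
\qquad d(\varphi)=\bigl(v(a)\varphi(ta)-\varphi(ia)u(a)\bigr)_a,
$$
is exactly the defining two-term complex whose kernel computes $\Hom(u,v)$ (a collection $(\varphi(s))$ commuting with the structure maps is precisely a morphism of representations) and whose cokernel computes $\Ext(u,v)$; here $d$ is surjective onto its target modulo the image, so the image of $\overline{T}\eta$ is the image of $d$ and the cokernel is $\Ext(u,v)$. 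I would phrase this as: $\Ker\overline{T}\eta_{(u,v)}=\Hom(u,v)$ is immediate from the equations $v(a)\varphi(ta)=\varphi(ia)u(a)$ characterizing a homomorphism, and the image statement follows because the cokernel of $d$ is by definition $\Ext^1(u,v)$ for representations of a quiver.

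\textbf{Main obstacle.} The computation itself is routine linear algebra; the only genuine care needed is bookkeeping. The subtle point is correctly identifying the two normal-space identifications ($\lieg/\lp\cong\bigoplus_s\Hom(V(s),W(s))$ on the source and $\bigoplus_a\Hom(V(ia),W(ta))$ on the target) and checking that the off-diagonal projection of the infinitesimal $\GL(\alpha)$-action produces the stated formula with the correct placement of $u$ and $v$ (i.e.\ that $v$ multiplies on the left and $u$ on the right, reflecting that $W$ is the target of the deformation). Getting the sign and the side of composition right, and confirming that the restriction of $T\eta$ to $T_{(u,v)}C^+$ really is the identity so that only the off-diagonal block survives, is where I would spend the most attention.
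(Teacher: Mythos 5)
Your overall strategy is the right one, and it is exactly the ``direct computation'' the paper alludes to (the paper offers no proof of this lemma beyond that phrase): differentiate the $\GL(\alpha)$-action on $\Rep(Q,\alpha)$ at the block-diagonal point $(u,v)$ in a direction $\varphi\in\lieg/\lp\simeq\oplus_{s\in Q_0}\Hom(V(s),W(s))$, project onto the normal block $\oplus_{a\in Q_1}\Hom(V(ia),W(ta))$, and recognize the resulting map as the standard two-term complex whose kernel is $\Hom(u,v)$ and whose cokernel is ${\rm Ext}(u,v)$. You also correctly handle the point that ``Image'' in the statement must be read as ``cokernel'': the image of the map is a subspace of $\oplus_a\Hom(V(ia),W(ta))$, while ${\rm Ext}(u,v)$ is the quotient by it.

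However, the central step is executed incorrectly, and at precisely the spot you yourself flagged as delicate. Your infinitesimal-action formula $\xi\cdot w(a)=\varphi(ta)w(a)-w(a)\varphi(ia)$ is correct, but substituting the block-diagonal $w(a)=u(a)\oplus v(a)$ and the strictly triangular $\varphi$, and extracting the $\Hom(V(ia),W(ta))$-component, gives
$$
\varphi(ta)\,u(a)\;-\;v(a)\,\varphi(ia),
$$
not $v(a)\varphi(ta)-\varphi(ia)u(a)$ as you assert. The expression you wrote is not even well defined: $\varphi(ta)$ has target $W(ta)$, which is not the source of $v(a)$, and $u(a)$ has target $V(ta)$, which is not the source of $\varphi(ia)$. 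What has happened is that you shaped your answer to match the displayed formula of the lemma, which is itself garbled (a typo: read $ha$ as $ia$ and swap the arguments, so that it means $\pm(\varphi(ta)u(a)-v(a)\varphi(ia))$); a correct proof must derive the well-typed expression from your own differentiation formula and note that the statement is to be read this way. The same correction is needed in your kernel step: the equations characterizing a morphism of representations $u\to v$ are $\varphi(ta)u(a)=v(a)\varphi(ia)$ for all $a$, not the ill-typed ones you wrote. With these corrections the rest of your argument --- kernel $=\Hom(u,v)$, cokernel $={\rm Ext}(u,v)$ via the standard two-term resolution for quiver (path algebra) representations --- goes through.
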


The quantities $\delta(\eta)$ and $d(\eta)$ are also particularly interesting:

\begin{lemma}
\label{lem:deltad2}
Consider a decomposition $\alpha=\beta_1\pplus\beta_2$ and the associated map $\eta$.
Then,
\begin{enumerate}
\item $\delta(\eta)=-\langle \beta_1,\beta_2\rangle$, and
\item $d(\eta)=\beta_1\circ \beta_2$.
\end{enumerate}
\end{lemma}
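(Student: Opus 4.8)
The plan is to compute $\delta(\eta)$ directly from the definition as a difference of codimensions, using the explicit descriptions of $C$, $C^+$ and $P(\lambda)$ recorded in Paragraph~\ref{par:defeta}, and then to identify $d(\eta)$ with $\beta_1\circ\beta_2$ by unwinding the geometric meaning of the degree of $\eta$ together with the tangent-space computation of Lemma~\ref{lem:Teta}.

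For the first assertion I would start from the formula
$$
\delta(\eta)=\dim X-\dim(G/P(\lambda))-\dim(C^+),
$$
with $X=\Rep(Q,\alpha)$, $G=\GL(\alpha)$, and $\lambda$ the one parameter subgroup inducing the decomposition $\alpha=\beta_1\pplus\beta_2$ (so $\alpha_1=\beta_1$, $\alpha_2=\beta_2$ in the notation of Paragraph~\ref{par:defeta}, and $L=\GL(\beta_1)\times\GL(\beta_2)$). A cleaner route, which avoids juggling three separate dimensions, is to use the alternative expression $\delta(\eta)=\codim(C^+,X)-\codim(P(\lambda),G)$. From the description of $C^+$ as the maps $u(a)$ preserving the filtration $\oplus_{j\le i}V_j$, the codimension of $C^+$ in $X=\bigoplus_{a\in Q_1}\Hom(V(ia),V(ta))$ is exactly the number of ``upper'' coordinates that are forced to vanish, namely $\sum_{a\in Q_1}\beta_1(ia)\beta_2(ta)$. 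Likewise $\codim(P(\lambda),\GL(\alpha))=\dim(G/P(\lambda))=\sum_{s\in Q_0}\beta_1(s)\beta_2(s)$, since $G/P(\lambda)$ is the space of $\beta_1$-dimensional subspaces in the flag direction at each vertex. Subtracting,
$$
\delta(\eta)=\sum_{a\in Q_1}\beta_1(ia)\beta_2(ta)-\sum_{s\in Q_0}\beta_1(s)\beta_2(s)=-\langle\beta_1,\beta_2\rangle,
$$
which is exactly assertion~(i). This is the routine half; the bookkeeping is the only thing to be careful about, in particular matching the orientation convention $\Hom(V(ia),V(ta))$ to the Ringle form $\langle\alpha,\beta\rangle=\sum_s\alpha(s)\beta(s)-\sum_a\alpha(ia)\beta(ta)$.

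For the second assertion the key is the image description of the immersion \eqref{eq:immersion}: the image of $G\times_{P(\lambda)}C^+$ in $G/P(\lambda)\times X$ is the incidence variety of pairs $(gP(\lambda),x)$ with $g^{-1}x\in C^+$, i.e. $x$ equipped with a subrepresentation of dimension $\beta_1$ sitting in the prescribed position. Thus $\eta$ sends a representation-with-chosen-$\beta_1$-subrepresentation to the underlying representation, and when $\delta(\eta)=0$ the degree $d(\eta)$ is by definition the number of points in a generic fibre of $\eta$ — that is, the number of $\beta_1$-dimensional subrepresentations of a generic $x\in\Rep(Q,\alpha)=\Rep(Q,\beta_1+\beta_2)$. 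By the Derksen--Schofield--Weyman definition recalled in Section~\ref{sec:defcarquois} this is precisely $\beta_1\circ\beta_2$, giving assertion~(ii); and when $\delta(\eta)\neq 0$ or $\eta$ fails to be dominant, both sides are $0$ by the convention on $\circ$ and the definition of $d$.

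The main obstacle is to make the last identification rigorous rather than merely intuitive: I need that the generic fibre of $\eta$ is finite and reduced of the stated cardinality, and that genericity of the fibre over $X$ matches the genericity used in the definition of $\beta_1\circ\beta_2$. Here Lemma~\ref{lem:Teta} does the work — it shows that at $[e:(u,v)]$ the induced normal map $\overline{T}\eta$ has kernel $\Hom(u,v)$ and image $\Ext(u,v)$, so $\eta$ is unramified (hence the fibre is reduced) exactly where $\Hom(u,v)=\Ext(u,v)=0$, which for a generic representation with $\langle\beta_1,\beta_2\rangle=0$ is the generic situation. Combining this transversality with the incidence description of the image, the set-theoretic fibre count and the scheme-theoretic degree agree, so $d(\eta)=\beta_1\circ\beta_2$ as claimed.
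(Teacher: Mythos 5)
Your proposal is correct and takes essentially the same route as the paper: assertion (i) is the same codimension/normal-space count, and assertion (ii) is obtained, exactly as in the paper, by using Immersion~\ref{eq:immersion} to identify the fibre of $\eta$ over a generic $u$ with the set of $\beta_1$-dimensional subrepresentations of $u$, with characteristic zero guaranteeing that this generic fibre count equals the degree of the field extension. Your closing paragraph invoking Lemma~\ref{lem:Teta} is unnecessary for this (char-$0$ separability already does the work), and note that the lemma only controls the tangent map at points $[e:(u,v)]$ with $(u,v)\in C$, not at arbitrary points of a generic fibre, so it could not be used as stated to establish reducedness there.
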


\begin{proof}
  By the discussion preceding Lemma~\ref{lem:Teta}, $\delta(\eta)$ equals the difference between the dimension 
of $\oplus_{a\in Q_1}{\rm Hom}(V(ia),W(ta))$ and of $\oplus_{s\in Q_0}{\rm Hom}(V(s),W(s))$. 
The first assertion follows.

Let $u\in\Rep(Q,\alpha)$.
Using Immersion~\ref{eq:immersion}, one identifies the fiber $\eta^{-1}(u)$ with the set $u$-stable subspaces
of $V$ of dimension $\beta_1$. In particular, $\eta^{-1}(u)$ identifies with the set of $\beta_1$-dimensional
subrepresentations of $u$. Since the characteristic of $k$ is assumed to be zero, when $u$ is generic this numbers
equals $d(\eta)$ on one hand and $\beta_1\circ\beta_2$ on the other one. 
\end{proof}\\

If $Y$ is a smooth variety of dimension $n$, $\Tau Y$ denotes its tangent bundle.
The line bundle $\bigwedge^n\Tau Y$ over $Y$ will be called the {\it determinant bundle} and
denoted by $\Det Y$.
If $\varphi\::\:Y\longto Y'$ is a morphism between smooth variety, we denote
by $\Det\varphi\::\:\Det Y\longto\Det Y'$ the determinant of its tangent map $T\varphi$.
We consider now the restriction of  $\Det\eta$ to $C^+$: it is a $P$-invariant section of
the $P$-linearized line bundle $\Det$ over $C^+$ defined by 
$\Det=\Det(G\times_P C^+)_{|C^+}^*\otimes\Det(X)_{|C^+}$.

Recall that for any $s\in Q_0$, we have fixed a vector space $V(s)$ of dimension $\alpha(s)$.
Let us fix, for any $s\in Q_0$ a decomposition $V(s)=V_1(s)\oplus V_2(s)$ such that $\dim V_i(s)=\beta_i(s)$
for $i=1,\,2$.
Consider the one parameter subgroup $\lambda$ of $\GL(\alpha)$ defined by $\lambda(s)(t)$ stabilizes the 
decompostion $V_1(s)\oplus V_2(s)$, equals to $\Id$ when restricted to $V_1(s)$ and 
$t\Id$ when restrected to $V_2(s)$.
It satisfies $P(\lambda)=P$, $\Rep(Q,\alpha)^\lambda=C$ and $C^+(\lambda)=C^+$.

\begin{lemma}
\label{lem:mudetC}
We assume that $\langle\beta_1,\beta_2\rangle=0$.
  The one parameter subgroup $\lambda$ acts trivially on $\Det_{|C}$.
\end{lemma}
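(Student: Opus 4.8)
The plan is to compute the integer $m$ by which $\kk^*$ acts, via $\lambda$, on the fibre $\Det_x$ over a point $x=(u,v)\in C$, and to show that $m=-\langle\beta_1,\beta_2\rangle$; the conclusion is then immediate under the hypothesis $\langle\beta_1,\beta_2\rangle=0$. Since $\lambda$ fixes $C$ pointwise and $\Det_x$ is a line, $\kk^*$ acts on it through a character, and this character is constant along $C$, so it suffices to evaluate it at a single point $x=(u,v)$; no genericity of $(u,v)$ is needed.

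First I would exploit that $\eta$ extends the immersion of $C^+$ in $X$, so that along $C^+$ the tangent map $T\eta$ restricts to the identity of $T_xC^+$ and is $\lambda$-equivariant. Writing, by definition, $\Det=\Det(G\times_PC^+)_{|C^+}^*\otimes\Det(X)_{|C^+}$ and splitting the two tangent spaces by the short exact sequences
$$0\to T_xC^+\to T_{[e:x]}(G\times_PC^+)\to N_{[e:x]}(C^+,G\times_PC^+)\to 0$$
and
$$0\to T_xC^+\to T_xX\to N_x(C^+,X)\to 0,$$
one sees that the top exterior power of $T_xC^+$ enters the two tensor factors of $\Det_x$ with opposite signs and hence cancels. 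Therefore $m$ is the $\lambda$-weight of $\bigwedge^{\mathrm{top}}N_x(C^+,X)$ minus that of $\bigwedge^{\mathrm{top}}N_{[e:x]}(C^+,G\times_PC^+)$, the overall minus sign on the second piece coming from the dual in the first factor of $\Det$.

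Next I would invoke the identifications established in the paragraph preceding Lemma~\ref{lem:Teta}: $N_{[e:x]}(C^+,G\times_PC^+)\cong\bigoplus_{s\in Q_0}\Hom(V_1(s),V_2(s))$ and $N_x(C^+,X)\cong\bigoplus_{a\in Q_1}\Hom(V_1(ia),V_2(ta))$, where $V_1(s)$ and $V_2(s)$ are the weight-$0$ and weight-$1$ eigenspaces of $\lambda(s)$. Since $\lambda$ acts trivially on every $V_1$ and by $t$ on every $V_2$, it acts with weight $+1$ on each summand $\Hom(V_1(\cdot),V_2(\cdot))$; so the two top exterior powers carry the weights $\sum_{a\in Q_1}\beta_1(ia)\beta_2(ta)$ and $\sum_{s\in Q_0}\beta_1(s)\beta_2(s)$ respectively. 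Subtracting yields
$$m=\sum_{a\in Q_1}\beta_1(ia)\beta_2(ta)-\sum_{s\in Q_0}\beta_1(s)\beta_2(s)=-\langle\beta_1,\beta_2\rangle,$$
which vanishes precisely when $\langle\beta_1,\beta_2\rangle=0$. The point I would check most carefully is the cancellation of the fibre contribution $\bigwedge^{\mathrm{top}}T_xC^+$: it relies on the copy of $T_xC^+$ occurring as the fibre of $\pi$ and the copy sitting inside $T_xX$ being identified $\lambda$-equivariantly by $T\eta$, which holds because $\eta$ is $G$-equivariant and restricts to the inclusion $C^+\hookrightarrow X$.
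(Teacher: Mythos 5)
Your proposal is correct and follows essentially the same route as the paper: both reduce the character of $\lambda$ on $\Det_{|C}$ (constant along $C$) to the difference of the $\lambda$-weights on the top exterior powers of $N(C^+,X)\simeq\oplus_{a\in Q_1}\Hom(V_1(ia),V_2(ta))$ and $N(C^+,G\times_PC^+)\simeq\oplus_{s\in Q_0}\Hom(V_1(s),V_2(s))$, using that $\eta$ extends the identity on $C^+$ so the $T_xC^+$ contributions cancel, and both arrive at $-\langle\beta_1,\beta_2\rangle$. Your extra bookkeeping with the two short exact sequences just makes explicit what the paper states in one line.
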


\begin{proof}
  Since $C$ is an affine space, $\lambda$ acts by the same character on each fiber of $\Det_{|C}$.
Since $\eta$  extend the identity on $C^+$, its character is the difference between the weights of $\lambda$
in $$
N_0(C^+,X)\simeq\oplus_{a\in Q_1}{\rm Hom}(V_1(ia),V_2(ta))
$$ and in 
$$
N_0(C^+,G\times_PC^+)\simeq T_eG/P\simeq\oplus_{s\in Q_0}\Hom(V_1(s),V_2(s)). 
$$
So, this character equals:
$$
\sum_{a\in Q_1}\beta_1(ia)\beta_2(ta)-\sum_{s\in Q_0}\beta_1(s)\beta_2(s);
$$ 
that is, $-\langle\beta_1,\beta_2\rangle$. The lemma follows.
\end{proof}

\subsection{Two formulas for $d(\eta_{\beta_1\pplus\cdots\pplus\beta_s})$}

Here comes the main result of this section:

\begin{theo}
\label{th:carquois}
  Let $\alpha=\beta_1\pplus\cdots\pplus\beta_s$ be a decomposition of $\alpha$ such that
for all $i<j$, $\langle \beta_i,\beta_j\rangle=0$. 

Then, $\delta(\eta_{\beta_1\pplus\cdots\pplus\beta_s})=0$ and 
$$\begin{array}{rl}
  d(\eta_{\beta_1\pplus\cdots\pplus\beta_s})&=(\beta_1\circ \alpha-\beta_1).(\beta_2\circ\alpha-\beta_1-\beta_2).\cdots.
(\beta_{s-1}\circ\beta_s)\\
&=(\alpha-\beta_s\circ\beta_s).(\beta-\beta_s-\beta_{s-1}\circ\beta_{s-1}).\cdots.
(\beta_1\circ\beta_2).
\end{array}
$$
\end{theo}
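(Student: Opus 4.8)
The plan is to argue by induction on the number $s$ of parts, peeling off one block at a time by means of Theorem~\ref{th:ppal}. The base case $s=2$ is exactly Lemma~\ref{lem:deltad2}: for $\alpha=\beta_1\pplus\beta_2$ with $\langle\beta_1,\beta_2\rangle=0$ one gets $\delta(\eta_{\beta_1\pplus\beta_2})=-\langle\beta_1,\beta_2\rangle=0$ and $d(\eta_{\beta_1\pplus\beta_2})=\beta_1\circ\beta_2$, which is simultaneously the value of both products.

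For the inductive step towards the first formula, I would set $G=\GL(\alpha)$, $X=\Rep(Q,\alpha)$ and write $\gamma=\beta_2+\cdots+\beta_s$. Let $\lambda$ be a one parameter subgroup realizing the coarse two-block decomposition $\alpha=\beta_1\pplus\gamma$, and let $\lambda_\eps$ realize the full refinement $\alpha=\beta_1\pplus\beta_2\pplus\cdots\pplus\beta_s$. The finer grading refines the coarser one, so the three containments $P_\eps\subset P$, $C_\eps^+\subset C^+$ and $C_\eps\subset C$ required by Theorem~\ref{th:ppal} hold; this is precisely the cone of admissible $\lambda_\eps$ described in the Remark following those hypotheses. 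Applying Lemma~\ref{lem:deltad2} to the coarse decomposition gives $\delta(G,X,C,\lambda)=-\langle\beta_1,\gamma\rangle=-\sum_{j\geq 2}\langle\beta_1,\beta_j\rangle=0$ and $d(G,X,C,\lambda)=\beta_1\circ\gamma=\beta_1\circ(\alpha-\beta_1)$.

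The crux is the identification of the Levi contribution. Here the Levi $L$ of $P(\lambda)$ is $\GL(\beta_1)\times\GL(\gamma)$ and the fixed component is $C=\Rep(Q,\beta_1)\times\Rep(Q,\gamma)$. Since $\lambda_\eps$ acts trivially on the $\beta_1$-block and refines only $\gamma$ into $\beta_2\pplus\cdots\pplus\beta_s$, the map $\eta_L$ of Theorem~\ref{th:ppal} factors as the identity on the spectator factor $\Rep(Q,\beta_1)$ times the map $\eta_{\beta_2\pplus\cdots\pplus\beta_s}$ on $\Rep(Q,\gamma)$; hence $\delta(L,C,C_\eps,\lambda_\eps)=\delta(\eta_{\beta_2\pplus\cdots\pplus\beta_s})$ and $d(L,C,C_\eps,\lambda_\eps)=d(\eta_{\beta_2\pplus\cdots\pplus\beta_s})$. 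The restricted orthogonality $\langle\beta_i,\beta_j\rangle=0$ for $2\leq i<j$ lets me invoke the inductive hypothesis on this smaller decomposition. Assembling through Theorem~\ref{th:ppal}, the first assertion yields $\delta(\eta_{\beta_1\pplus\cdots\pplus\beta_s})=0$ and the second yields $d(\eta_{\beta_1\pplus\cdots\pplus\beta_s})=\bigl(\beta_1\circ(\alpha-\beta_1)\bigr)\cdot d(\eta_{\beta_2\pplus\cdots\pplus\beta_s})$, which by induction is $\prod_{i=1}^{s-1}\beta_i\circ(\beta_{i+1}+\cdots+\beta_s)$, the first product. The second product follows by the symmetric argument: peel off the \emph{last} block using the coarse decomposition $\alpha=(\beta_1+\cdots+\beta_{s-1})\pplus\beta_s$, for which $\langle\beta_1+\cdots+\beta_{s-1},\beta_s\rangle=\sum_{i<s}\langle\beta_i,\beta_s\rangle=0$ still holds. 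Now $\beta_s$ is the spectator factor, the Levi map is $\eta_{\beta_1\pplus\cdots\pplus\beta_{s-1}}$, the new factor is $(\alpha-\beta_s)\circ\beta_s$, and induction gives $\prod_{k=2}^{s}(\beta_1+\cdots+\beta_{k-1})\circ\beta_k$.

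I expect the main obstacle to be the Levi identification $\eta_L\simeq\Id\times\eta_{\beta_2\pplus\cdots\pplus\beta_s}$. One must verify that $L\times_{P_\eps\cap L}(C_\eps^+\cap C)\longto C$ is genuinely the product of the identity on $\Rep(Q,\beta_1)$ with the $\eta$-map of the smaller quiver datum, i.e.\ that the parabolic $P_\eps\cap L$, the cell $C_\eps^+\cap C$, and the induced group action all split compatibly across the two blocks. The containment hypotheses are routine once the cone structure from the Remark is invoked, and the numerology of the two products is a bookkeeping of telescoping identities such as $\alpha-\beta_1-\cdots-\beta_i=\beta_{i+1}+\cdots+\beta_s$.
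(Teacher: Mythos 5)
Your proposal is correct and takes essentially the same route as the paper: induction on $s$ with base case Lemma~\ref{lem:deltad2}, peeling off $\beta_1$ (resp.\ $\beta_s$) by applying Theorem~\ref{th:ppal} with $\eta=\eta_{\beta_1\pplus(\alpha-\beta_1)}$ and $\eta_\eps$ the full refinement, the Levi map being identified with $\Id\times\eta_{\beta_2\pplus\cdots\pplus\beta_s}$ --- an identification the paper uses implicitly and that you rightly single out as the point to verify. The only cosmetic difference is that the paper establishes $\delta(\eta_{\beta_1\pplus\cdots\pplus\beta_s})=0$ by a direct codimension count, whereas you derive it inductively from the first assertion of Theorem~\ref{th:ppal}; both are valid.
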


\begin{proof}
 By Section~\ref{par:defeta}, the codimension of $C^+$ in $G\times_PC^+$ is
$$
\sum_{i<j}\sum_{s\in Q_0}\beta_i(s)\beta_j(s);
$$
and, the codimension of $C^+$ in $\Rep(Q,\alpha)$ is
$$
\sum_{i<j}\sum_{a\in Q_1}\beta_i(ia)\beta_j(ta).
$$
Since $\forall i<j$  $\langle \beta_i,\beta_j\rangle=0$, this implies 
that $\delta(\eta_{\beta_1\pplus\cdots\pplus\beta_s})=0$.

We will just prove the first formula for $d(\eta_{\beta_1\pplus\cdots\pplus\beta_s})$. 
The second one can be proved in a similar way.
When $s=2$, the theorem follows from Lemma~\ref{lem:deltad2}.
Assume that $s=3$. A direct application of Theorem~\ref{th:ppal} with
 $\eta_\eps=\eta_{\beta_1\pplus\beta_2\pplus\beta_3}$ and $\eta=\eta_{\beta_1\pplus(\alpha-\beta_1)}$ gives
$$
\begin{array}{rl}
  d(\eta_{\beta_1\pplus\beta_2\pplus\beta_3})&=(\beta_1\circ\alpha-\beta_1).d(\eta_{\beta_2\pplus\beta_3})\\
&=(\beta_1\circ\alpha-\beta_1).(\beta_2\circ\beta_3).
\end{array}
$$
One can easily ends the proof by an induction on $s$.
\end{proof}\\

\begin{remark}
In the proof of Theorem~\ref{th:carquois}, the induction was made by the paranthésages 
$\beta_1\pplus\cdots\pplus\beta_s=\beta_1\pplus(\beta_2\pplus\cdots\pplus\beta_s)$ and
$\beta_1\pplus\cdots\pplus\beta_s=(\beta_1\pplus(\beta_2\cdots\pplus\beta_s)$.
All other paranthésage gives a similar formula.  
\end{remark}

\paragraphe
We now want to discuss the assumption ``$\forall i<j$  $\langle \beta_i,\beta_j\rangle=0$''.
This assumption is actually similar to Levi-movability. Indeed, we have the equivalent of
 Lemma~\ref{lem:Levimovdet}:

 \begin{lemma}
\label{lem:wellcarquois}
 Let $\alpha=\beta_1\pplus\cdots\pplus\beta_s$ be a decomposition of $\alpha$ such that
$\delta(\eta_{\beta_1\pplus\cdots\pplus\beta_s})=0$. Then, the following are equivalent:
\begin{enumerate}
\item \label{ass1:wellcarquois}
for all $i<j$, $\langle \beta_i,\beta_j\rangle=0$
and $d(\eta_{\beta_1\pplus\cdots\pplus\beta_s})\neq 0$; 
\item 
\label{ass2:wellcarquois}
there exists $y\in C$ such that the tangent map of $\eta_{\beta_1\pplus\cdots\pplus\beta_s}$ 
at $[e:y]$ is invertible.
\end{enumerate}
\end{lemma}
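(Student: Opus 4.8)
The plan is to reduce this lemma to the already-established dictionary between the tangent map of $\eta$ and the homological data $\Hom$ and $\Ext$ of quiver representations. The decisive tool is Lemma~\ref{lem:Teta}, together with the observation (used throughout Section~\ref{sec:gen}) that $\eta$ is an isomorphism onto a neighborhood of a point precisely when its tangent map there is invertible, and that invertibility of a linear map between spaces of equal dimension is equivalent to injectivity together with surjectivity. Since we assume $\delta(\eta_{\beta_1\pplus\cdots\pplus\beta_s})=0$, the source and target of $T\eta$ have the same dimension at every point, so invertibility, injectivity, and surjectivity of $\overline{T}\eta$ are all equivalent conditions.

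First I would treat the case $s=2$ to establish the homological reformulation, then bootstrap to general $s$. For $s=2$, pick $y=(u,v)\in C=\Rep(Q,\beta_1)\times\Rep(Q,\beta_2)$. By Lemma~\ref{lem:Teta}, the map $\overline{T}\eta_{[e:(u,v)]}$ has kernel $\Hom(u,v)$ and image $\Ext(u,v)$, where $u,v$ now denote the corresponding representations of dimension $\beta_1,\beta_2$. The exact sequence relating $\Hom$, $\Ext$, and the Ringel form gives $\dim\Hom(u,v)-\dim\Ext(u,v)=\langle\beta_1,\beta_2\rangle$. Thus $\overline{T}\eta$ is invertible at some $[e:y]$ if and only if there exists a pair $(u,v)$ with $\Hom(u,v)=\Ext(u,v)=0$; and such a pair exists for generic $(u,v)$ exactly when a generic representation of dimension $\alpha=\beta_1+\beta_2$ contains a subrepresentation of dimension $\beta_1$ with no self-extensions, which by Lemma~\ref{lem:deltad2} forces both $\langle\beta_1,\beta_2\rangle=0$ and $d(\eta)=\beta_1\circ\beta_2\neq 0$. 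This gives the equivalence for $s=2$.

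For the general step, I would iterate using the factorization provided by Theorem~\ref{th:ppal}, exactly as in the proof of Theorem~\ref{th:carquois}. Assertion~\ref{ass2:wellcarquois} is precisely the statement that $(\GL(\alpha),\Rep(Q,\alpha),C,\lambda)$ is well generically finite in the sense of the definition preceding Theorem~\ref{th:wellppal}. By that theorem, well-generic-finiteness of $\eta_{\beta_1\pplus\cdots\pplus\beta_s}$ descends to both the smaller datum on $G/P$ (here the coarser decomposition $\beta_1\pplus(\alpha-\beta_1)$) and the Levi datum (the decomposition $\beta_2\pplus\cdots\pplus\beta_s$ of $\alpha-\beta_1$ inside $L=\GL(\alpha-\beta_1)$), and conversely the multiplicativity of $d$ in Theorem~\ref{th:ppal} together with the $s=2$ base case rebuilds invertibility from the factors. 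Running the induction along the parenthesization $\beta_1\pplus(\beta_2\pplus\cdots\pplus\beta_s)$ therefore shows that Assertion~\ref{ass2:wellcarquois} holds if and only if every two-block map occurring in the chain is invertible, which by the base case is equivalent to $\langle\beta_i,\beta_j\rangle=0$ for all $i<j$ together with nonvanishing of each factor $d$; since $d(\eta_{\beta_1\pplus\cdots\pplus\beta_s})$ is the product of these factors, this is Assertion~\ref{ass1:wellcarquois}.

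The main obstacle I anticipate is the bookkeeping in the backward direction: one must be careful that the vanishing of the composite degree $d(\eta_{\beta_1\pplus\cdots\pplus\beta_s})$ does not by itself control invertibility, so the hypothesis $\langle\beta_i,\beta_j\rangle=0$ for \emph{all} pairs $i<j$ (not merely consecutive ones) is essential to ensure that the $\delta$'s of all intermediate maps vanish and hence that each intermediate $\overline{T}\eta$ is square. Getting the genericity right---that a single generic $y\in C$ can be chosen so that invertibility holds simultaneously at the point and for all the Levi and base factors---is where Theorem~\ref{th:wellppal} does the real work, and I would lean on it rather than re-deriving the transversality by hand.
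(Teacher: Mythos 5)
Your argument stumbles, in both the base case and the inductive step, on the point that is in fact the crux of the lemma: passing from invertibility of $T\eta$ at a \emph{generic} point of $G\times_PC^+$ to invertibility at a point $[e:y]$ with $y\in C$. From $d(\eta_{\beta_1\pplus\cdots\pplus\beta_s})\neq 0$ and generic smoothness (characteristic zero) you get a point where the tangent map is invertible, and $G$-equivariance lets you take it of the form $[e:y]$ with $y\in C^+$; but $C$ is a \emph{proper closed} subvariety of $C^+$, so the dense open locus of invertibility need not meet it. For $s=2$ this is exactly the difference between ``a generic $\alpha$-dimensional representation admits a $\beta_1$-dimensional subrepresentation'' and ``there is a split pair $y=(u,v)\in C$ with $\Hom(u,v)={\rm Ext}(u,v)=0$''; your appeal to Lemma~\ref{lem:deltad2} does not bridge this (that lemma only computes $\delta$ and $d$), and without invoking Schofield's theorem on generic ${\rm ext}$ --- which the paper never uses --- your base case is unproved. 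The paper bridges the gap with a GIT ingredient you never invoke: $\Det\eta_{|C^+}$ is a $P$-invariant section of the line bundle $\Det_{|C^+}$, the hypothesis $\langle\beta_i,\beta_j\rangle=0$ for all $i<j$ makes the torus $Z={\mathbb G}_m^s$ act trivially on $\Det_{|C}$ (the computation of Lemma~\ref{lem:mudetC}), and then \cite[Proposition~5]{GITEigen} forces the section to be non identically zero on $C$. Relatedly, your inductive step uses Theorem~\ref{th:wellppal} in the wrong direction: that theorem deduces well generic finiteness of the coarse and Levi data \emph{from} that of the fine decomposition, whereas ``rebuilding invertibility from the factors'' needs the converse, which appears nowhere in the paper; proving it runs into the same obstruction, namely finding a single $y\in C_\eps$ at which $T\eta$ and $T\eta_L$ are simultaneously invertible, $C_\eps$ being a proper closed subvariety of $C$.

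There is a second, combinatorial gap in your direction (ii)$\Rightarrow$(i): running the induction along the single parenthesization $\beta_1\pplus(\beta_2\pplus\cdots\pplus\beta_s)$ produces only the $s-1$ nested conditions $\langle\beta_i,\beta_{i+1}+\cdots+\beta_s\rangle=0$, not the $s(s-1)/2$ pairwise conditions demanded by assertion (i) (already for $s=3$, $\langle\beta_1,\beta_2\rangle+\langle\beta_1,\beta_3\rangle=0$ and $\langle\beta_2,\beta_3\rangle=0$ do not give $\langle\beta_1,\beta_2\rangle=0$). One could repair this by applying Theorem~\ref{th:wellppal} to \emph{all} coarsenings into consecutive intervals and inducting on $j-i$, but the paper's argument is shorter and needs no induction at all: since $Z={\mathbb G}_m^s$ fixes $[e:y]$ and $\eta$ is $G$-equivariant, $T\eta_{[e:y]}$ is $Z$-equivariant, hence restricts to an isomorphism between the weight-$t_jt_i^{-1}$ eigenspaces of source and target for every pair $i<j$; the difference of their dimensions is $\langle\beta_i,\beta_j\rangle$, which therefore vanishes for all pairs at once, and $d\neq 0$ follows from invertibility at one point together with $\delta=0$.
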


 \begin{proof}
   Let $\underline{V}=\oplus_i\underline{V}_i$ be a decomposition of $\underline{V}$ such that 
$\dim\underline{V}_i=\beta_i$.
Consider the linear action of the torus $Z={\mathbb G}_m^s$ on $\underline{V}$ such that
$(t_1,\cdots,t_s).v=t_iv$ for all $t_i\in{\mathbb G}_m$ and $v\in \underline{V}_i(s)$ for any 
$s\in Q_0$. Since $Z$ is embedded in $\GL(\alpha)$ it also acts on $G\times_PC^+$.

Let $y$ be a point in $C$ satisfying Assertion~\ref{ass2:wellcarquois}.
Since, $Z$ fixes $[e:y]$ and $\eta$ is $G$-equivariant, $T\eta_{\beta_1\pplus\cdots\pplus\beta_s}$
is $Z$-equivariant for the tangent action of $Z$.
It follows that for all $i<j$, $T\eta_{\beta_1\pplus\cdots\pplus\beta_s}$ induces an isomorphism
from the eigensubspaces of $T_{[e:y]}G\times_PC^+$ and $T_y\Rep(Q,\alpha)$ of weight $t_jt_i^{-1}$.
In particular, these two eigensubspaces have the same dimension. But, a direct compution shows 
that the difference between these two dimension is precisely $\langle \beta_i,\beta_j\rangle$.
Assertion~\ref{ass1:wellcarquois} follows.\\

Conversely, let us assume that Assertion~\ref{ass1:wellcarquois} follows.
Since $d(\eta_{\beta_1\pplus\cdots\pplus\beta_s})\neq 0$, there exists a point $G\times_PC^+$ 
where the tangent map of $\eta_{\beta_1\pplus\cdots\pplus\beta_s}$ is invertible.
Since $\eta$ is $G$-equivariant, its determinant is not identicaly zero on  $C^+$.
Using the fact for all $i<j$ $\langle \beta_i,\beta_j\rangle=0$, a direct computation
(like in the proof of Lemma~\ref{lem:mudetC}) shows that $Z$ acts trivialy on 
$\Det_{|C}$. By \cite[Proposition 5]{GITEigen},  the determinant of
$\eta$ is not identicaly zero on $C$. Assertion~\ref{ass2:wellcarquois} follows.
 \end{proof}

\paragraphe
The dimension of ${\rm Ext(u,v)}$ for generic $\alpha$ and $\beta$ dimensional 
representations $u$ and $v$ is denoted by $\ext(\alpha,\beta)$.

\begin{coro}
We assume that $Q$ has no oriented cycle.
  Let $\alpha,\,\beta$ and $\gamma$ be three dimension-vectors. 
We assume that $\langle\alpha,\beta\rangle=\langle\alpha,\gamma\rangle=0$ and 
$\beta\circ\gamma=1$.

Then, $\alpha\circ \beta+\gamma=(\alpha\circ\beta).(\alpha\circ \gamma)$.
\end{coro}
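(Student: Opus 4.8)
The plan is to deduce the corollary from Theorem~\ref{th:carquois} applied to the three-block decomposition $\alpha+\beta+\gamma=\alpha\pplus\beta\pplus\gamma$, after first upgrading the hypotheses. First I would observe that the Ringel form $\langle\beta,\gamma\rangle$ already vanishes: by Lemma~\ref{lem:deltad2} one has $d(\eta_{\beta\pplus\gamma})=\beta\circ\gamma=1\neq0$, and by the very definition of the degree a nonzero value forces $\delta(\eta_{\beta\pplus\gamma})=0$, i.e. $-\langle\beta,\gamma\rangle=0$. Hence all three forms $\langle\alpha,\beta\rangle,\langle\alpha,\gamma\rangle,\langle\beta,\gamma\rangle$ vanish, so the decomposition $\alpha\pplus\beta\pplus\gamma$ satisfies the orthogonality hypothesis of Theorem~\ref{th:carquois} (the relevant ordered pairs are exactly these three).

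Next I would write out the two product formulas of Theorem~\ref{th:carquois} for this decomposition. The first gives
$$d(\eta_{\alpha\pplus\beta\pplus\gamma})=(\alpha\circ(\beta+\gamma))\cdot(\beta\circ\gamma)=\alpha\circ(\beta+\gamma),$$
using $\beta\circ\gamma=1$, while the second gives
$$d(\eta_{\alpha\pplus\beta\pplus\gamma})=((\alpha+\beta)\circ\gamma)\cdot(\alpha\circ\beta).$$
Comparing, I obtain the key identity $\alpha\circ(\beta+\gamma)=(\alpha\circ\beta)\cdot((\alpha+\beta)\circ\gamma)$. This already reduces the corollary to the single equality
$$(\alpha+\beta)\circ\gamma=\alpha\circ\gamma. \quad(\dagger)$$
Note that this reduction also disposes of the degenerate case: if $\alpha\circ\beta=0$, then both the claimed identity and the key identity read $\alpha\circ(\beta+\gamma)=0$.

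The remaining equality $(\dagger)$ is where the assumption that $Q$ has no oriented cycle enters, and it is the main obstacle. For such $Q$ the path algebra is hereditary, so the Ringel form computes $\langle\cdot,\cdot\rangle=\dim\Hom-\dim{\rm Ext}$ on honest representations; combined with Schofield's criterion ($\mu\circ\nu\neq0$ if and only if $\ext(\mu,\nu)=0$), the hypothesis $\beta\circ\gamma\neq0$ yields $\ext(\beta,\gamma)=0$ and hence, since $\langle\beta,\gamma\rangle=0$, also $\hom(\beta,\gamma)=0$; in the nontrivial case $\alpha\circ\gamma\neq0$ one likewise gets $\ext(\alpha,\gamma)=\hom(\alpha,\gamma)=0$. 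I would then interpret both sides of $(\dagger)$ through the generic fibres of the maps $\eta$, which by Immersion~\ref{eq:immersion} count subrepresentations: $(\alpha+\beta)\circ\gamma$ is the number of $\gamma$-quotients of a generic $R$ of dimension $\alpha+\beta+\gamma$, and $\alpha\circ\gamma$ the number of $\gamma$-quotients of a generic representation of dimension $\alpha+\gamma$. The point to establish is that the generic $\beta$-direction is invisible to $\gamma$-quotients: since $\hom(\alpha,\gamma)=\hom(\beta,\gamma)=0$, for generic $R$ the generic $\beta$-part lies in the kernel of every $\gamma$-quotient, so these quotients factor through a fixed $(\alpha+\gamma)$-dimensional quotient of $R$ that is itself generic of that dimension.

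Establishing this last factorisation cleanly --- that collapsing the generic $\beta$-part induces a bijection on $\gamma$-quotients, with the collapsed representation generic of dimension $\alpha+\gamma$ --- is the crux, and it is precisely the step that needs the Derksen--Schofield--Weyman theory of general subrepresentations (existence of the relevant generic sub- and quotient representations, governed by the vanishing of the appropriate $\ext$) rather than anything from Section~\ref{sec:gen}. Once $(\dagger)$ is in hand, substituting it into the key identity yields $\alpha\circ(\beta+\gamma)=(\alpha\circ\beta)\cdot(\alpha\circ\gamma)$, as desired.
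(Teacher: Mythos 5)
The first half of your argument is correct and is exactly the paper's own reduction: applying both product formulas of Theorem~\ref{th:carquois} to $\alpha\pplus\beta\pplus\gamma$ and using $\beta\circ\gamma=1$ yields the key identity $\alpha\circ(\beta+\gamma)=(\alpha\circ\beta)\cdot((\alpha+\beta)\circ\gamma)$, and the case $\alpha\circ\beta=0$ is then immediate. (Your preliminary observation that $\beta\circ\gamma\neq 0$ forces $\langle\beta,\gamma\rangle=0$, via Lemma~\ref{lem:deltad2} and the definition of the degree, is a point the paper glosses over; it is correct.) The problem is the remaining identity $(\dagger)$: $(\alpha+\beta)\circ\gamma=\alpha\circ\gamma$, which is precisely Lemma~\ref{lem:arondb=1} of the paper. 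You do not prove it: you sketch a fibre-counting argument and explicitly defer its crux to an unspecified appeal to Derksen--Schofield--Weyman theory. That deferred step is a genuine gap, not a routine verification, and the sketch itself has concrete holes.

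Specifically: (i) the ``generic $\beta$-part'' of a generic representation $R$ of dimension $\alpha+\beta+\gamma$ is not defined, and for a $\beta$-dimensional subrepresentation of $R$ even to exist one needs, by Schofield's criterion, $\ext(\beta,\alpha+\gamma)=0$; this is neither among your hypotheses nor derived from them (you control $\ext(\beta,\gamma)$ and, in the nontrivial case, $\ext(\alpha,\beta)$, but $\ext$ is not symmetric and says nothing about $\ext(\beta,\alpha+\gamma)$). (ii) Even granting such an $S$, to conclude $\Hom(S,N)=0$ for every $\gamma$-dimensional quotient $N$, and to identify the number of $\gamma$-quotients of $R/S$ with $\alpha\circ\gamma$, you need $S$, $N$ and $R/S$ to be \emph{generic} representations of their dimension vectors; but subrepresentations and quotients of a generic representation are not generic in general --- controlling this requires further $\ext$-vanishing statements from Schofield's theory, which you do not supply. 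Note also that your route never invokes $\ext(\alpha,\beta)=0$, which is the hypothesis on which the paper's Lemma~\ref{lem:arondb=1} actually runs (the paper extracts it from $\alpha\circ\beta\neq 0$ via Lemmas~\ref{lem:wellcarquois} and~\ref{lem:Teta}). The paper sidesteps all genericity issues by switching to the other characterisation of $\circ$ from \cite{DSW:nbsubrep}, namely $\alpha\circ\gamma=\dim\kk[\Rep(Q,\gamma)]_{\langle\alpha,\cdot\rangle}$, and proving that the multiplication map $\kk[\Rep(Q,\gamma)]_{\langle\alpha,\cdot\rangle}\otimes\kk[\Rep(Q,\gamma)]_{\langle\beta,\cdot\rangle}\longto\kk[\Rep(Q,\gamma)]_{\langle\alpha+\beta,\cdot\rangle}$ is an isomorphism: injective because the second factor is a line and the coordinate ring is a domain, surjective because $\ext(\alpha,\beta)=0$ makes $\eta_{\alpha\pplus\beta}$ dominant, hence (being proper) surjective, so each Derksen--Weyman generator $c^V$ factors as $c^{V'}\cdot c^{V/V'}$ by \cite[Lemma~1]{DW:saturation}. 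To salvage your version you would have to prove the missing existence and genericity statements; as written, the corollary is not established.
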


\begin{proof}
Theorem~\ref{th:carquois} applied to $\alpha\pplus\beta\pplus\gamma$ gives:  
$$
(\alpha+\beta\circ \gamma).(\alpha\circ\beta)=
(\alpha\circ \beta+\gamma).(\beta\circ\gamma)=
(\alpha\circ \beta+\gamma),
$$
since $(\beta\circ\gamma)=1$.
If $\alpha\circ\beta=0$, the corollary follows.
Now assume that $\alpha\circ\beta\neq 0$.
Lemma~\ref{lem:wellcarquois} implies that the determinant of
$\eta_{\alpha\pplus\beta}$  is not identically zero on $C$.
But, Lemma~\ref{lem:Teta} implies that $\ext(\alpha,\beta)=0$.
Now, the corollary is a direct consequence of Lemma~\ref{lem:arondb=1} below.
\end{proof}

\begin{lemma}
\label{lem:arondb=1}
We assume that $Q$ has no oriented cycle.
  Let $\alpha,\,\beta$ and $\gamma$ be three vector dimensions.
We assume that $\beta\circ\gamma=1$ and  $\ext(\alpha,\beta)=0$.

Then, $\alpha+\beta\circ\gamma=\alpha\circ\gamma$.
\end{lemma}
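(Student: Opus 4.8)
The plan is to first decide when the two sides vanish, by a homological computation, and then, in the remaining case, to identify the two numbers by pushing the $\beta$-part into the kernel of every quotient of dimension $\gamma$.

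Write $\hom(\mu,\nu)$ for the generic value of $\dim\Hom(u,v)$; by Lemma~\ref{lem:Teta} one has $\langle\mu,\nu\rangle=\hom(\mu,\nu)-\ext(\mu,\nu)$, and by Lemmas~\ref{lem:deltad2} and~\ref{lem:Teta} the number $\mu\circ\nu$ is non-zero precisely when $\langle\mu,\nu\rangle=0$ and $\ext(\mu,\nu)=0$, that is when $\hom(\mu,\nu)=\ext(\mu,\nu)=0$. In particular $\beta\circ\gamma=1$ gives $\hom(\beta,\gamma)=\ext(\beta,\gamma)=0$. Since $\ext(\alpha,\beta)=0$, a general representation $M$ of dimension $\alpha+\beta$ sits in a short exact sequence $0\to A\to M\to B\to 0$ with $A$ general of dimension $\alpha$ and $B$ general of dimension $\beta$ (Schofield's theory of general representations). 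Applying $\Hom(-,T)$ with $T$ general of dimension $\gamma$, and using $\Hom(B,T)={\rm Ext}(B,T)=0$, the long exact sequence gives $\Hom(M,T)\simeq\Hom(A,T)$ and ${\rm Ext}(M,T)\simeq{\rm Ext}(A,T)$, whence $\hom(\alpha+\beta,\gamma)=\hom(\alpha,\gamma)$ and $\ext(\alpha+\beta,\gamma)=\ext(\alpha,\gamma)$. By the criterion above, $(\alpha+\beta)\circ\gamma$ and $\alpha\circ\gamma$ therefore vanish simultaneously, so I may assume both are non-zero; then in addition $\langle\alpha,\gamma\rangle=\langle\beta,\gamma\rangle=0$ and $\ext(\alpha,\gamma)=\ext(\beta,\gamma)=0$.

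For the count, I use that an $(\alpha+\beta)$-dimensional subrepresentation $K$ of a representation $W$ of dimension $\alpha+\beta+\gamma$ is the same datum as the quotient $W/K$ of dimension $\gamma$; thus $(\alpha+\beta)\circ\gamma$ is the number of dimension-$\gamma$ quotients of a general $W$, and $\alpha\circ\gamma$ that of a general $N$ of dimension $\alpha+\gamma$. The mechanism that removes $\beta$ is the vanishing $\hom(\beta,\gamma)=0$: if $B_0\hookrightarrow W$ is a subrepresentation isomorphic to a general representation of dimension $\beta$ and $q\colon W\to T$ is a quotient with $T$ general of dimension $\gamma$, then $q|_{B_0}\in\Hom(B_0,T)=0$, so $B_0$ lies in $\ker q$. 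When $\ext(\beta,\alpha)=0$ the same long exact sequence, now for $0\to A\to N\to T\to 0$, gives $\ext(\beta,\alpha+\gamma)=\ext(\beta,\alpha)=0$, so a general $W$ really does contain such a $B_0$ with $W/B_0=:N$ general of dimension $\alpha+\gamma$; then $K\mapsto K/B_0$ is a bijection from the dimension-$\gamma$ quotients of $W$ onto those of $N$, and $(\alpha+\beta)\circ\gamma=\alpha\circ\gamma$ follows at once.

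The hard part will be the case $\ext(\beta,\alpha)\neq0$, in which a general $W$ carries no $\beta$-dimensional subrepresentation, so no quotient $N$ is available globally. Here my plan is to specialise $W$ to $W_0=N\oplus B$, with $N$ general of dimension $\alpha+\gamma$ and $B$ general of dimension $\beta$: every dimension-$\gamma$ quotient of $W_0$ with general target again annihilates $B$, and these correspond bijectively to the dimension-$\gamma$ quotients of $N$, hence number $\alpha\circ\gamma$. It then remains to match this special count with the generic value $(\alpha+\beta)\circ\gamma$, and I expect this to be the genuine difficulty. Since a finite fibre of the generically finite morphism $\eta_{(\alpha+\beta)\pplus\gamma}$ carries at most $(\alpha+\beta)\circ\gamma$ points, it suffices to prove that the fibre over $W_0$ is finite and reduced and is exhausted by the quotients factoring through $N$; I would establish this with the tangent-space criterion of Lemma~\ref{lem:Teta}, showing that at each such quotient the tangent map of $\eta_{(\alpha+\beta)\pplus\gamma}$ is invertible — the required vanishing of the relevant $\Hom$ and ${\rm Ext}$ groups being supplied by the orthogonality relations collected in the first step together with the rigidity forced by $\beta\circ\gamma=1$ — and then invoke conservation of number to conclude $(\alpha+\beta)\circ\gamma=\alpha\circ\gamma$.
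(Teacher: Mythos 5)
Your reduction to the case where both $(\alpha+\beta)\circ\gamma$ and $\alpha\circ\gamma$ are non-zero is sound, and your treatment of the sub-case $\ext(\beta,\alpha)=0$ is essentially correct (modulo the joint-genericity bookkeeping you use silently and repeatedly: that a subrepresentation and a quotient extracted from one and the same general representation form a general \emph{pair} is standard Schofield theory, but it must be argued). The genuine gap is the sub-case $\ext(\beta,\alpha)\neq 0$, which you yourself call ``the genuine difficulty'' and leave as a plan rather than a proof, and the plan fails as stated at two points. First, the exhaustion step: you want every dimension-$\gamma$ quotient $q$ of the special representation $W_0=N\oplus B$ to kill $B$ because $q|_{B}\in\Hom(B,T)=0$; but the vanishing $\hom(\beta,\gamma)=0$ controls $\Hom(B,T)$ only for a \emph{general} target $T$, and the quotients of the non-general $W_0$ have no reason to be general, so quotients not factoring through $N$ cannot be excluded this way. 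Second, the conservation-of-number step needs (a) finiteness of the whole fiber of $\eta_{(\alpha+\beta)\pplus\gamma}$ over $W_0$, which is nowhere addressed; (b) properness of this morphism --- true here, since $G\times_P C^+$ is closed in $G/P(\lambda)\times\Rep(Q,\alpha+\beta+\gamma)$, but never invoked, and without it invertibility of the tangent map along a finite fiber does not force that fiber to have exactly $\deg$ points; and (c) the exhaustion above, since if the fiber contained extra points, \'etaleness at \emph{all} of its points would make the degree equal to the total number of points, hence strictly larger than $\alpha\circ\gamma$. As written, even granting (a), your argument yields only the one-sided inequality $\alpha\circ\gamma\le(\alpha+\beta)\circ\gamma$ in this sub-case.

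It is worth contrasting this with the paper's proof, which avoids the dichotomy on $\ext(\beta,\alpha)$ entirely by switching to the algebraic interpretation of $\circ$: by Derksen--Schofield--Weyman, $\alpha\circ\gamma=\dim\kk[\Rep(Q,\gamma)]_{\langle\alpha,\cdot\rangle}$. The hypothesis $\beta\circ\gamma=1$ says the weight-$\langle\beta,\cdot\rangle$ space is a line, so the multiplication map
$$
m\,:\,\kk[\Rep(Q,\gamma)]_{\langle\alpha,\cdot\rangle}\otimes
\kk[\Rep(Q,\gamma)]_{\langle\beta,\cdot\rangle}\longto
\kk[\Rep(Q,\gamma)]_{\langle\alpha+\beta,\cdot\rangle}
$$
is injective because $\kk[\Rep(Q,\gamma)]$ has no zero divisors; and $m$ is surjective because the target is spanned by Schofield semi-invariants $c^V$ with $V$ of dimension $\alpha+\beta$, every such $V$ admits an $\alpha$-dimensional subrepresentation $V'$ (here $\ext(\alpha,\beta)=0$ gives dominance of $\eta_{\alpha\pplus\beta}$, and properness upgrades dominance to surjectivity --- this is exactly where your hypothesis enters), and $c^V=c^{V'}\cdot c^{V/V'}$. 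The key gain is that this argument applies to \emph{every} $V$, with no genericity assumption --- precisely the uniform statement your specialization $W_0=N\oplus B$ is struggling to supply. If you want a purely geometric proof, the missing ingredient is a uniform (not merely generic) mechanism forcing the $\beta$-part into the kernel of every quotient; the factorization $c^V=c^{V'}c^{V/V'}$ is the algebraic substitute for it.
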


\begin{proof}
In \cite{DSW:nbsubrep}, Derksen-Schofield-Weyman prove that $\alpha\circ\gamma$
equals the dimension of $\kk[\Rep(Q,\gamma)]_{\langle\alpha,\cdot\rangle}$. 
Consider the multiplication morphism:
$$
m\,:\,\kk[\Rep(Q,\gamma)]_{\langle\alpha,\cdot\rangle}\otimes
\kk[\Rep(Q,\gamma)]_{\langle\beta,\cdot\rangle}\longto
\kk[\Rep(Q,\gamma)]_{\langle\alpha+\beta,\cdot\rangle}.
$$ 
We claim that $m$ is an isomorphism. The lemma will follow directly.
Since $\dim(\kk[\Rep(Q,\gamma)]_{\langle\beta,\cdot\rangle})=1$ and
$\kk[\Rep(Q,\gamma)]$ has no zero-divisors, $m$ is injective.

Since $\ext(\alpha,\beta)=0$, $\eta_{\alpha\pplus\beta}$ is dominant. 
But, it is proper; so, it is surjective.

In \cite{DW:saturation}, Derksen-Weyman prove that 
$\kk[\Rep(Q,\gamma)]_{\langle\alpha+\beta,\cdot\rangle}$ is generated by functions $c^V$ 
associated  to various  $\alpha+\beta$-dimensional representation $V$ (see also \cite{DomZub}).
Since  $\eta_{\alpha\pplus\beta}$ is surjective, there exists an $\alpha$-dimensional 
subrepresentation $V'$ of $V$. 
By \cite[Lemma~1]{DW:saturation}, $c^V=c^{V'}.c^{V/V'}$.
It follows that $m$ is surjective. 
\end{proof}

\bibliographystyle{amsalpha}
\bibliography{biblio}

\begin{center}
  -\hspace{1em}$\diamondsuit$\hspace{1em}-
\end{center}

\vspace{5mm}
\begin{flushleft}
N. R.\\
Universit{\'e} Montpellier II\\
D{\'e}partement de Math{\'e}matiques\\
Case courrier 051-Place Eug{\`e}ne Bataillon\\
34095 Montpellier Cedex 5\\
France\\
e-mail:~{\tt ressayre@math.univ-montp2.fr}  
\end{flushleft}

\end{document}